\newcommand{\wt}{\widetilde}
\newcommand{\supp}{ \mbox{supp}}
\newcommand{\Argmin}{\mbox{Argmin}}
\newcommand{\Ical}{\mathcal I}
\newcommand{\Jcal}{\mathcal J}
\newcommand{\Lcal}{\mathcal L}
\newcommand{\sgn}{{\rm sgn}}
\newcommand{\mycut}[1]{{}}
\newtheorem{theorem}{Theorem}[section] %[section]
\newtheorem{lemma}{Lemma}[section] %[section]
\newtheorem{definition}{Definition}[section]
\def\thanks#1{\protected@xdef\@thanks{\@thanks
        \protect\footnotetext{#1}}}
\begin{document}
\title{\textbf{
Sparse Extended Mean-Variance-CVaR Portfolios with Short-selling}}

\author[1]{Ahmad Mousavi$^*$\thanks{Emails: \{mousavi and boukouva\}@american.edu, salahim@guilan.ac.ir,
}} 
\affil{Department of Mathematics and Statistics, American University}
\author[2]{Maziar Salahi\thanks{}}
\affil[2]{Department of Applied Mathematics, Faculty of Mathematical Sciences, University of Guilan}
\author[1]{Zois Boukouvalas\thanks{}}

\date{\today}
\date{}
\maketitle
\begin{abstract}
This paper introduces a novel penalty decomposition algorithm customized for addressing the non-differentiable and nonconvex problem of extended mean-variance-CVaR portfolio optimization with short-selling and cardinality constraints. The proposed algorithm solves a sequence of penalty subproblems using a block coordinate descent (BCD) method while striving to fully exploit each component within the objective function and constraints. Through rigorous analysis, the well-posedness of each subproblem of the BCD method is established, and closed-form solutions are derived where possible. A comprehensive theoretical convergence analysis is provided to confirm the efficacy of the introduced algorithm in reaching a Lu--Zhang minimizer for this intractable optimization problem. Numerical experiments conducted on real-world datasets validate the practical applicability and effectiveness of the introduced algorithm based on various criteria. Notably, the existence of closed-form solutions within the BCD subproblems prominently underscores the efficiency of our algorithm when compared to state-of-the-art methods.
\end{abstract}

\section{Introduction}
The mean-variance CVaR problem is crucial in finance, as it provides a robust framework for optimizing portfolios by considering both expected returns (mean) and associated risks (variance and conditional value at risk). This approach helps investors balance the trade-off between maximizing returns and minimizing losses, enhancing portfolio performance and risk management in volatile markets
\cite{dixit2020project,lwin2017mean, van2021surprising}. Including short-selling is essential, as it allows investors to profit from downward market movements, improving portfolio diversification and risk management by providing additional opportunities for profit generation in declining markets \cite{wang2023distributionally}.

{\color{black}Moreover, sparse optimization plays a critical role in various fields, such as machine learning, signal processing, and finance, offering efficient solutions for problems where the underlying data or parameters exhibit sparsity \cite{cesarone2013new,du2022high,steuer2024computing, xia2023high, wu2024sparse}.  In portfolio optimization, having only a few assets can significantly impact diversification, potentially increasing exposure to individual asset risk and volatility. This highlights the importance of careful asset selection and risk management strategies. Hence, the cardinality constraints have been imposed in various portfolio optimization models  \cite{deng2024unified,larni2024bi}.
}

In this paper, we tackle the following extended mean-variance-CVaR portfolio problem, which includes short-selling and nonconvex cardinality constraints:
\begin{equation}\label{pr: p-original} \tag{\mbox{$P$}}
\begin{aligned}
\min_{x\in \mathbb{R}^n,\gamma\in \mathbb R}\quad & \ 
f(x,\gamma):=\lambda_1 x^TAx - \lambda_2(\Psi^Tx-\delta\|x-\phi\|_1) 
+\lambda_3(\gamma+\frac{1}{m(1-\beta)}
\sum_{j=1}^m (-d_j^Tx-\gamma)^+)
\\ \textrm{subject to} \quad & 
 \textbf{e}^Tx=1, \quad 
L\le x\le U,    \quad
\left\{
\begin{array}{ll}
x_i\ge 0; \,  \forall i\in \Gamma^+:= \{ i \in [n] | \, \eta_i>0\},\\
x_i\le 0; \,  \forall i\in \Gamma^-:= \{ i \in [n] | \, \eta_i<0\},\\
x_i\in \mathbb R; \,  \forall i\in \Gamma^0:= \{ i \in [n] | \, \eta_i=0\},\\
\end{array}\right. \quad \text{and} \quad   
\|x\|_0\le k,   
\end{aligned}
\end{equation}
{\color{black}where $\lambda_1$ and $\lambda_2\ge 0$, and $\lambda_3=1-\lambda_1-\lambda_2\ge 0$ are the weights of variance, return, and CVaR, respectively.   $\beta\in (0,1)$ is the confidence level, $\delta \ge 0$ is the cost per change in the proportion of the $i$-th asset, $L, U \in \mathbb R^n$ and $L< U$ are the lower and upper bounds for the weight vector of assets $x$, and  $A\succeq 0$ is the covariance matrix.
Also we have
$\Psi= \mu+r_ch, \eta = \mu$, and 
$d_j:= b_j-r_ch; \ \forall j\in [m]$,
where $\mu$ is the return vector, $r_c\ge 0$ is the risk-neutral interest, $h_j$ is the portion of risk free return for the investor when stock $i$ is on a short selling position. The sign constraint on $x$ is essentially $x_i\eta_i\ge 0; \forall i\in [n]$, which means for each stock that is not in a short-selling position, the proportion of investment is positive. Finally,  $b_j,\ j \in [m] $ are the generated scenarios and $\phi_i$ is the proportion of the initial wealth invested in the $i$th stock.

Problem (\ref{pr: p-original}) has been the subject of several research \cite{hamdi2024penalty,csen2024sparsity, shi2019optimal}. When $\lambda_1=0$, it reduces to the widely studied mean-CVaR with a cardinality constraint, see, for example, \cite{ferreira2021mean,kobayashi2021bilevel} and references therein.  It is worth noting that by minimizing both Variance and CVaR simultaneously, we are essentially seeking a portfolio that exhibits low overall volatility (variance) and resilience to crises (CVaR). In the event of a major market shock (such as the 2020 COVID crash), the portfolio is explicitly designed to limit the average severity of losses. This is why we use two measures of risk together.}

Recently, Hamdi et al. \cite{hamdi2024penalty} applied the penalty decomposition method (PDM) to solve (\ref{pr: p-original}) and reported promising numerical results. Nevertheless, the full structure of the problem was not exploited. 
Therefore, we introduce a novel penalty decomposition algorithm customized to fully leverage every component of the objective function in (\ref{pr: p-original}) along with its constraints. We solve a sequence of penalty subproblems, of which a saddle point is efficiently identified using a BCD method. We meticulously examine each corresponding subproblem within this BCD method to derive closed-form solutions whenever feasible. Specifically, to handle the $\|.\|_1$ norm in the objective function along with bound constraints, we introduce a new variable. Additionally, another variable is required to manage the softmax function in the objective function. Lastly, we resort to a third variable to address the cardinality constraint, along with the short-selling constraints. Our discussions show how introducing these new variables leads to structured subproblems, which, in turn, admit closed-form or otherwise efficient solutions. 
We rigorously establish the convergence of our introduced algorithm toward a Lu--Zhang \cite{lu2013sparse} minimizer of the non-differentiable problem (\ref{pr: p-original}) with non-convex constraints.

Section \ref{sec: method} introduces Algorithm \ref{algo: PD_method_p}, a penalty decomposition algorithm designed to tackle the non-differentiable and nonconvex problem (\ref{pr: p-original}). This algorithm employs a block coordinate Algorithm \ref{algo: BCD} to address each penalty subproblem. We establish the well-posedness of these subproblems and derive closed-form solutions for three of them. The convergence analysis of both Algorithm \ref{algo: BCD} and Algorithm \ref{algo: PD_method_p} is established in Section \ref{sec: convergence}, demonstrating that our introduced algorithm effectively reaches a Lu--Zhang minimizer of (\ref{pr: p-original}). Additionally, Section \ref{sec: numerical} presents extensive numerical results obtained from real-world data. {\color{black}To improve the readability of the paper, we have moved the proofs of some lemmas and theorems to the appendix.}

\textbf{Notation}.
The complement of a set $S$ is denoted as $S^c$. We use $|S|$ to represent its cardinality. For a natural number $n$, we define $[n]$ as the set $\{1,2,\dots,n\}$.
Now, consider a set $S$ given by $S=\{i_1,i_2,\dots, i_{|S|}\}$, which is a subset of $[n]$. For any vector $x$ in $\mathbb R^n$, we denote the coordinate projection of $x$ with respect to the indices in $S$ as $[x_S;0]$, which means that the $i$th element of this vector equals $x_i$ when $i$ belongs to $S$, and it equals $0$ for $i$ in $S^c$.
We determine whether a matrix $A$ is positive semidefinite or definite by the notations $A \succeq 0$ and $A \succ 0$, respectively. In this paper, we let $\sgn(a):=1$ for $a>0, \sgn(a):=-1$ for $a<0,$ and $\sgn(0):=0$. For $x$ and $y\in \mathbb R^n$,  $x\circ y$ shows the Hadamard (element-wise) multiplication of $x$ and $y$. Recall that \cite{boyd2004convex}
$$\partial (\|\cdot\|_1)|_{ x} \, = \, J_1\times J_2 \times \cdots \times J_n, \quad \mbox{with} \quad J_k = \left\{
\begin{array}{ll}
[-1,1] & \mbox{if}\quad    x_i = 0,\\
\{\sgn(x)\} &  \mbox{if}\quad  x_i \ne 0,\\
\end{array}\right.
$$
and
$$\partial (\cdot)^+|_{ x} \, = \, J_1\times J_2 \times \cdots \times J_n, \quad \mbox{with} \quad J_k = \left\{
\begin{array}{ll}
[0,1] & \mbox{if}\quad  x_i = 0,\\
\{1\} & \mbox{if}\quad    x_i > 0,\\
\{0\} & \mbox{if}\quad   x_i < 0.\\
\end{array}\right.
$$

\section{An Efficient Customized Penalty Decomposition Algorithm}  \label{sec: method} 

Here, we propose our customized penalty decomposition algorithm for solving (\ref{pr: p-original}) that fully exploits all the available structures of the objective function and constraints of this problem. 

\subsection{Methodology} 
In this subsection, we elaborate on how we design our customized penalty decomposition algorithm. Observe that we can equivalently reformulate this nonconvex problem as follows:
\begin{equation} \label{pr: equivalent_p}
\begin{aligned}
\min_{x,y,z,w\in \mathbb{R}^n, \gamma\in \mathbb R}\quad & \ 
 \lambda_1 x^TAx - \lambda_2\Psi^Tx +\lambda_2\delta\|z-\phi\|_1 +
 \lambda_3 (\gamma+\frac{1}{m(1-\beta)}
\sum_{j=1}^m (-d_j^Tw-\gamma)^+)
\\ \textrm{subject to} \quad & 
 \textbf{e}^Tx=1, \quad 
y_i\ge 0; \,  \forall i\in \Gamma^+, \quad y_i\le 0; \,  \forall i\in \Gamma^-, \quad
\|y\|_0\le k,  \quad L\le z\le U,    \\ & x-y=0, \quad  x-z=0, \quad\text{and} \quad  x-w=0.
\end{aligned}
\end{equation}
Specifically, we introduce $y$ to deal with sparsity,  $z$ to handle $\|.\|_1$, and $w$ to effectively manage the last term of the objective function in (\ref{pr: p-original}). The constraints are also decoupled accordingly using these new variables, so that we can possibly obtain closed-form solutions or, if not, much simpler penalty subproblems in our customized penalty decomposition algorithm (see  Subsection \ref{subsec: detaile_subproblems}).  

More precisely, suppose that
 \begin{align} \label{def: q_rho}
q_{\rho}(x,y,z,w,\gamma):= &\,  \lambda_1 x^TAx - \lambda_2\Psi^Tx +\lambda_2\delta\|z-\phi\|_1\notag
% \\ & 
+ \lambda_3 (\gamma+\frac{1}{m(1-\beta)}
\sum_{j=1}^m (-d_j^Tw-\gamma)^+)\notag  \\ & +\rho(\|x-y\|_2^2
+\|x-z\|_2^2+\|x-w\|_2^2),
\end{align}
and 
 \begin{align} 
\mathcal X:=& \, \{x\in \mathbb R^n \, | \, \textbf{e}^Tx=1\},\notag
\\ 
\mathcal Y:=& \,  \{y\in \mathbb R^n \, | \, y_i\ge 0; \,  \forall i\in \Gamma^+, \quad y_i\le 0; \,  \forall i\in \Gamma^-, \quad \text{and} \quad  \|y\|_0\le k\} \notag  \\ 
 \mathcal{Z}:=& \, \{z \in \mathbb R^n \, | \, L\le z\le U\} \notag \\
  \mathcal{W}:=& \, \{w \in \mathbb R^n \, | \, \textbf{e}^Tw=1 \quad \text{and} \quad  L\le w\le U\}. \notag
\end{align}
In our method, we consider a sequence of penalty subproblems as follows:
\begin{equation} \label{pr: p-x,y,z,w,gam} \tag{\mbox{$P_{x,y,z,w,\gamma}$}}
\min_{x,y,z,w,\gamma} \quad q_{\rho}(x,y,z,w,\gamma)  \qquad
\textrm{subject to} \qquad 
 x\in \mathcal{X}, \quad  y\in \mathcal{Y}, \quad z\in \mathcal{Z}, \quad w\in \mathcal{W},  \quad \text{and}  \quad \gamma\in \mathbb R
\end{equation}
The idea is that by gradually increasing the value of $\rho$ towards infinity, we can effectively address the optimization problem (\ref{pr: equivalent_p}). It is important to emphasize that (\ref{pr: p-x,y,z,w,gam}) is yet nonconvex. However, the following BCD method efficiently converges to a saddle point of it (see Theorem \ref{thm: BCD_convergence}).

\begin{algorithm}[H]
%=====================================================================================
\caption{BCD Method for Solving (\ref{pr: p-x,y,z,w,gam})}
\begin{algorithmic}[1]
\label{algo: BCD}
%========================================================
\STATE Input: Select arbitrary $y_{0}\in \mathcal Y, z_{0}\in \mathcal Z,$ and $ (w_0,\gamma_0)\in \mathcal{W} \times \mathbb R$.
\STATE Set $l=0$.
\STATE 
$x_{l+1}=\Argmin_{x\in \mathcal X} \ q_{\rho}(x,y_l,z_l,w_l, \gamma_l)$. % (using \ref{eqn: x_*})).
\STATE 
$y_{l+1}\in \Argmin_{y\in \mathcal Y} \ q_{\rho}(x_{l+1},y,z_l,w_l, \gamma_l).$
\STATE 
$z_{l+1}=\Argmin_{z\in \mathcal Z} \ q_{\rho}(x_{l+1},y_{l+1},z, w_l, \gamma_l).$
\STATE 
$(w_{l+1},\gamma_{l+1})\in \Argmin_{(w,\gamma)\in \mathcal{W}\times \mathbb R} \ q_{\rho}(x_{l+1},y_{l+1},z_{l+1}, w,\gamma).$
\STATE $l \leftarrow l+1$ and go to step (3).
%=====================================================================================
\end{algorithmic}
%=====================================================================================
\end{algorithm}

We are now prepared to introduce our penalty decomposition algorithm, which begins with a positive penalty parameter and gradually increases it until convergence is achieved. Algorithm \ref{algo: BCD} handles the corresponding subproblem for a fixed $\rho$. For the problem (\ref{pr: p-original}), we assume that we have a feasible point denoted as $x^{\text{feas}}$ in hand, which is easy to obtain.
To present this algorithm and its subsequent analysis, we define:
\[
\begin{aligned}
\Upsilon \;&\ge\;
\max\!\Big\{ f(x^{\text{feas}},\gamma^{\text{feas}}),\;
              \min_{x\in \mathcal{X}}
              q_{\rho^{(0)}}\!\big(x, y^{(0)}_0, z^{(0)}_0, w^{(0)}_0, \gamma^{(0)}_0\big)
     \Big\} \;>\; 0,\\[7pt]
X_\Upsilon \;&:=\;
\Big\{ (x,\gamma)\in \mathbb{R}^n\times\mathbb{R}\ \Big|\ f(x,\gamma)\le \Upsilon \Big\}.
\end{aligned}
\]

\begin{algorithm}[H]
%=====================================================================================
\caption{A Customized Penalty Decomposition Algorithm for Solving (\ref{pr: p-original})}
\begin{algorithmic}[1]
\label{algo: PD_method_p}
%=====================================================================================
\STATE Inputs:  $r>1, \rho^{(0)}>0$, and  $y^{(0)}_0\in \mathcal Y, z^{(0)}_0\in \mathcal Z,  w^{(0)}_0 \in \mathcal{W},$ and $ \gamma^{(0)}_0\in \mathbb R$
\STATE Set $j=0$.
\REPEAT
 \STATE Set $l=0$.
 \REPEAT
\STATE Solve $x^{(j)}_{l+1}=\Argmin_{x\in \mathcal X} \ q_{\rho^{(j)}}(x,y^{(j)}_l,z^{(j)}_{l}, w^{(j)}_{l}, \gamma^{(j)}_{l})$ [see (\ref{eqn: x_*})].
\STATE Solve
$y^{(j)}_{l+1}\in \Argmin_{y\in \mathcal Y} \ q_{\rho^{(j)}}(x^{(j)}_{l+1},y,z^{(j)}_{l}, w^{(j)}_{l}, \gamma^{(j)}_{l})$ [see (\ref{eqn: y_*})].
 \STATE Solve  $z^{(j)}_{l+1}=\Argmin_{z\in\mathcal Z}\ q_{\rho^{(j)}}(x^{(j)}_{l+1},y^{(j)}_{l+1},z, w^{(j)}_{l},\gamma^{(j)}_{l})$ [see (\ref{eqn: z_*})].
  \STATE Solve  $(w^{(j)}_{l+1},\gamma^{(j)}_{l+1})\in \Argmin_{(w,\gamma)\in \mathcal{W} \times \mathbb R}\ q_{\rho^{(j)}}(x^{(j)}_{l+1},y^{(j)}_{l+1},z^{(j)}_{l+1},w,\gamma)$ [see (\ref{pr: pw-equivalent})].
 \STATE Set $l \leftarrow l+1$.
 \UNTIL{stopping criterion (\ref{BCD-practical-stopping-criterion}) is met.} 

\STATE  Set $\rho^{(j+1)} = r\cdot \rho^{(j)}$.

 \STATE Set
 $(x^{(j)},y^{(j)} ,z^{(j)},w^{(j)}, \gamma^{(j)}):= (x^{(j)}_{l}, y^{(j)}_{l},z^{(j)}_{l}, w^{(j)}_{l}, \gamma^{(j)}_{l})$.

\STATE 
If $\min_{x\in \mathcal{X}} q_{\rho^{(j+1)}}(x,y^{(j)},z^{(j)}, w^{(j)},\gamma^{(j)})> \Upsilon$, then $y_0^{(j+1)}=z_0^{(j+1)}=w_0^{(j+1)}=x^{\text{feas}}$ and $\gamma^{(j+1)}_0=\gamma^{\text{feas}}$. Otherwise,
$y^{(j+1)}_0 = y^{(j)}, z^{(j+1)}_0 = z^{(j)}, w^{(j+1)}_0 = w^{(j)}$, and $\gamma^{(j+1)}_0 = \gamma^{(j)}$.
\STATE Set $j \leftarrow j+1$.
\UNTIL{stopping criterion (\ref{outer loop stopping criteria}) is met}. 
%\STATE Output: $x^{(j)}$
\end{algorithmic}
%=====================================================================================
\end{algorithm}
We stop the inner loop if 
\begin{equation} \label{BCD-practical-stopping-criterion}
\max
\left\{
\frac{\|x_l-x_{l-1}\|_\infty}{\max \left(\|x_l\|_\infty,1 \right)},
\frac{\|y_l-y_{l-1}\|_\infty}{\max \left(\|y_l\|_\infty,1 \right)},
\frac{\|z_l-z_{l-1}\|_\infty}{\max \left(\|z_l\|_\infty,1 \right)},
\frac{\|w_l-w_{l-1}\|_\infty}{\max \left(\|w_l\|_\infty,1 \right)},
\frac{|\gamma_l-\gamma_{l-1}|}{\max \left(|\gamma_l|,1 \right)}
\right\}
\le
\epsilon_I,
\end{equation}
and the outer loop when a convergence criterion is met:
\begin{equation} \label{outer loop stopping criteria}
\|x^{(j)}-y^{(j)}\|_{\infty}+\|x^{(j)}-z^{(j)}\|_{\infty}
+\|x^{(j)}-w^{(j)}\|_{\infty}
\le
\epsilon_O.
\end{equation}

\subsection{Subproblems of Algorithm \ref{algo: BCD}} \label{subsec: detaile_subproblems}
We discuss how to efficiently solve the constrained subproblems presented in Algorithm \ref{algo: BCD} below.
\subsubsection{Subproblem of $x$}
This subproblem ($\min_{x\in\mathcal X}\ q_{\rho}(x,y,z,w,\gamma)$) becomes the following convex quadratic optimization problem:
\begin{equation} \label{pr: px} \tag{\mbox{$P_x$}}
\begin{aligned}
\min_{x\in \mathbb{R}^n} \quad & 
\lambda_1x^TAx-\lambda_2\Psi^Tx+\rho (\|x-y\|_2^2+\|x-z\|_2^2+\|x-w\|_2^2)
 \qquad \textrm{subject to} \qquad
 \textbf{e}^Tx=1.
\end{aligned}
\end{equation}
\begin{lemma} \label{lem: solution_p_x}
The solution to the problem (\ref{pr: px}) is 
\begin{equation} \label{eqn: x_*} %-
x_* = \frac{1}{2}B^{-1}\left( 
\vartheta + \frac{1-0.5\textbf{e}^TB^{-1}\vartheta}{0.5\textbf{e}^TB^{-1}\textbf{e}}\textbf{e}
\right),
\end{equation}
where 
\begin{equation} \label{eqns: B and vartheta} %-
B:=\lambda_1A+3\rho I
\qquad \text{and} \qquad \vartheta:=2\rho (y+z+w)+\lambda_2\Psi.
\end{equation}
\end{lemma}
\subsubsection{Subproblem of $y$}
Recall that we let $\sgn(0)=0$, so by slightly abusing the notation, this subproblem ($ \min_{y\in\mathcal Y}\ q_{\rho}(x,y,z,w,\gamma)$) is as follows:
\begin{equation} 
\label{pr: py} \tag{\mbox{$P_y$}}
\min_{y\in \mathbb{R}^n} \ \|x-y\|_2^2 \qquad \mbox{subject to}  \qquad  \sgn(\eta)\circ y\ge 0, \quad \textrm{and} \quad \|y\|_0\le k.
\end{equation}
To provide a closed-form solution to the latter problem,  we first define the following generalized sparsifying operator.
\begin{definition} \label{def: gen_sparse}
Let $x$ and $\eta\in \mathbb R^n$ and a natural number $k\in [n]$  be given. Denote $ x^+=\max(x,0), \, x^-=\max(-x,0),\, \Gamma^+=\{i\, | \, \eta_i>0\},\, \Gamma^-=\{i\, | \, \eta_i<0\},$ and $\Gamma^0 =[n]-(\Gamma^+\cup\Gamma^{-})=\{i\, | \, \eta_i=0\}$. Then, the generalized sparsifying operator $\mathcal{S}_{k,\eta}(x)$ is defined as follows:
\begin{equation} \label{eqn: y_*} %-
\mathcal{S}_{k,\eta}(x)= (
[x^+_{\Gamma^+};0]-[x^-_{\Gamma^-};0]+[x_{\Gamma^0};0])_\Jcal
\end{equation}
where $\Jcal$ is an index set corresponding to the $k$ largest components of 
$[x^+_{\Gamma^+};0]-[x^-_{\Gamma^-};0]+[x_{\Gamma^0};0]$
in absolute value.
\end{definition}

\begin{lemma} \label{lem: solution_p_y}
The solution to the problem (\ref{pr: py}) is  $y_*=\mathcal{S}_{k,\eta}(x)$ defined in (\ref{eqn: y_*}).
\end{lemma}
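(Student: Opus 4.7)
The plan is to exploit the separability of the objective and the per-coordinate nature of the sign constraints, reducing the problem to a classical hard-thresholding argument applied to an auxiliary vector $v$ that encodes the sign constraints.

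First, observe that the objective $\|x-y\|_2^2=\sum_{i=1}^n(x_i-y_i)^2$ decouples across coordinates, and that the constraints $\sgn(\eta)\circ y\ge 0$ are also per-coordinate: on $\Gamma^+$ we require $y_i\ge 0$, on $\Gamma^-$ we require $y_i\le 0$, and on $\Gamma^0$ there is no sign restriction. Thus, once we fix a support $\Jcal\subseteq[n]$ with $|\Jcal|\le k$ (the set of indices where $y_i$ is allowed to be nonzero), the remaining problem splits into $|\Jcal|$ independent one-dimensional problems $\min_{y_i}(x_i-y_i)^2$ subject to the appropriate sign constraint. A direct case analysis gives the pointwise optimizer
\[
y_i^{**}=\begin{cases} x_i^+ & i\in \Gamma^+,\\ -x_i^- & i\in \Gamma^-,\\ x_i & i\in \Gamma^0,\end{cases}
\]
which is exactly the $i$th component of $v:=[x_{\Gamma^+}^+;0]-[x_{\Gamma^-}^-;0]+[x_{\Gamma^0};0]$. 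For indices $i\notin\Jcal$ the forced value $y_i=0$ contributes $x_i^2$ to the objective.

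Second, I would compare the cost of assigning $y_i=y_i^{**}$ versus $y_i=0$ at each index. A short computation using $x_i^2=(x_i^+)^2+(x_i^-)^2$ shows that the decrease in the squared-distance objective obtained by activating coordinate $i$ (i.e.\ including $i$ in the support and setting $y_i=v_i$) is exactly $|v_i|^2$ in all three cases:
$x_i^2-(x_i^-)^2=(x_i^+)^2=|v_i|^2$ on $\Gamma^+$, $x_i^2-(x_i^+)^2=(x_i^-)^2=|v_i|^2$ on $\Gamma^-$, and $x_i^2-0=x_i^2=|v_i|^2$ on $\Gamma^0$. Therefore the outer problem of choosing $\Jcal$ becomes
\[
\max_{\Jcal\subseteq[n],\,|\Jcal|\le k}\ \sum_{i\in\Jcal}|v_i|^2,
\]
which is maximized precisely by taking $\Jcal$ to be an index set of the $k$ largest components of $|v|$. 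Combining with the pointwise optimizer gives $y_*=v_\Jcal=\mathcal{S}_{k,\eta}(x)$, which is the claimed formula in (\ref{eqn: y_*}).

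The argument is essentially two clean reductions (separability across coordinates, then reduction of support selection to top-$k$ in absolute value), so no single step is a real obstacle; the only part that needs care is unifying the three sign regimes so that the savings from activating index $i$ can be written as $|v_i|^2$ in every case. Writing the optimal support as ``an index set corresponding to the $k$ largest components of $|v|$'' also correctly handles non-uniqueness when ties occur among the magnitudes of the entries of $v$, which matches the $\in\Argmin$ formulation used in Algorithm~\ref{algo: BCD}.
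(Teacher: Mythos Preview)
Your proposal is correct and follows essentially the same two-step strategy as the paper: fix a candidate support, solve the resulting separable sign-constrained least-squares problem coordinatewise, and then optimize over the support by recognizing that the objective equals $\|x\|_2^2-\sum_{i\in\Jcal}|v_i|^2$. The only stylistic difference is that the paper derives the per-coordinate optimizer via KKT conditions rather than your direct case analysis, but the underlying argument and the final identification of $\Jcal$ as the top-$k$ indices of $|v|$ are the same.
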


\subsubsection{Subproblem of $z$}

This subproblem ($\min_{z\in\mathcal Z}\ q_{\rho}(x,y,z,w,\gamma)$) becomes the following generalized soft thresholding operator problem:
\begin{equation} \label{pr: pz} \tag{\mbox{$P_z$}}
\begin{aligned}
\min_{z\in \mathbb{R}^n} \quad & 
{\rho}\|z-x\|_2^2+\lambda_2\delta\|z-\phi\|_1
 \qquad \textrm{subject to} \qquad
 L\le z\le U.
\end{aligned}
\end{equation}
\begin{lemma} \label{lem: solution_p_z} %-
For $i\in [n]$, the solution to the problem (\ref{pr: pz}) is as follows: 
\begin{equation} \label{eqn: z_*}
z_{*i}:= \left\{ 
\begin{array}{ll}
x_i-\kappa  & \qquad \mbox{if \quad  $L_i\le  x_i-\kappa\le  U_i$} \qquad \text{and} \qquad
\mbox{$ x_i-\phi_i-\kappa>0$};
\\
 x_i+\kappa & \qquad \mbox{if \quad  $ L_i\le x_i+\kappa \le  U_i$}\qquad \text{and} \qquad
\mbox{$ x_i-\phi_i+\kappa<0$};
\\
0 &  \qquad \mbox{if \quad $|x_i-\phi_i|\le \kappa$} \qquad \text{and} \qquad
\mbox{$\phi_i\in [L_i,U_i]$};
\\
L_i &  \qquad \mbox{if} \quad
\left\{ \begin{array}{ll}
 \phi_i<x_i-\kappa < L_i, \qquad \mbox{or} &\\
 x_i+\kappa < \min(\phi_i, L_i), \qquad \mbox{or}& \\
|x_i-\phi_i|\le \kappa \qquad \mbox{and} \qquad
L_i>\phi_i;\end{array} \right.
\\
U_i &  \qquad \mbox{if} \quad
\left\{ \begin{array}{ll}
U_i< x_i+\kappa < \phi_i, \qquad \mbox{or}& \\
 x_i-\kappa > \max(U_i, \phi_i) \qquad \mbox{or} &\\
|x_i-\phi_i|\le \kappa \qquad \mbox{and} \qquad
\phi_i>U_i,\end{array} \right.
\end{array} \right. \end{equation}
where $\kappa = \lambda_2\delta/\rho$.
\end{lemma}
Even though  we obtained the closed-form solution of (\ref{pr: pz}) by analysis, we mention that its KKT conditions are:
\begin{equation*} \label{eqn: pz_KKT}
\begin{aligned}
& 2\rho (z_*-x)+\lambda_2\delta \partial\left( \|\cdot\|_1)\right|_{z_*-\phi}-T_1+T_2=0, \quad  0\le T_1\perp (z_*-L)\ge 0, \quad  \mbox{and}\quad  0\le T_2\perp (U-z_*)\ge 0. 
\end{aligned}
\end{equation*}

\subsubsection{Subproblem of $w$ and $\gamma$}
This subproblem ($\min q_\rho(x,y,z,w,\gamma))$ becomes the following:
\begin{equation}  \tag{$P_{(w,\gamma)}$}\label{pr: pw}
\begin{aligned}
\min_{(w, \gamma)\in \mathbb R^n \times \mathbb R}\quad & \ c(w,\gamma):=
 \lambda_3 (
\gamma+\frac{1}{m(1-\beta)}
\sum_{j=1}^m (-d_j^Tw-\gamma)^+)+\rho\|x-w\|_2^2
\\ \textrm{subject to} \quad &  \textbf{e}^Tw=1,    \quad  \text{and} \quad L\le w\le U.
\end{aligned}
\end{equation}
\begin{lemma} \label{lem: boundedness of p_w}
Let $\lambda_3 \in (0,1]$ (or equivalently $\lambda_1\ge 0, \lambda_2\ge 0$ with $\lambda_1+\lambda_2<1$) 
and $\beta\in (0,1)$. Then, the problem (\ref{pr: pw}) has a unique solution. 
\end{lemma}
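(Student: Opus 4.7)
The plan is to apply the Weierstrass existence theorem (after establishing coercivity in the unbounded variable $\gamma$) and then to use strict convexity of the objective for uniqueness. The feasible set $\mathcal{W}\times\mathbb{R}$ is compact in $w$ but unbounded in $\gamma$, so existence is not immediate; uniqueness on the other hand will be driven by the strictly convex penalty $\rho\|x-w\|_2^2$.

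For \textbf{existence}, I would first observe that $\mathcal{W}$ is compact, so the linear functionals $w\mapsto d_j^Tw$ are uniformly bounded on $\mathcal{W}$ by some constant $M$. For $\gamma\ge M$ every hinge $(-d_j^Tw-\gamma)^+$ vanishes, leaving $c(w,\gamma)\ge\lambda_3\gamma\to+\infty$. For $\gamma\le -M$ every hinge is active, and rearranging yields
\[
c(w,\gamma)=-\frac{\lambda_3\beta}{1-\beta}\,\gamma+\frac{\lambda_3}{m(1-\beta)}\sum_{j=1}^m(-d_j^Tw)+\rho\|x-w\|_2^2,
\]
where the coefficient $-\lambda_3\beta/(1-\beta)$ is strictly negative because $\lambda_3\in(0,1]$ and $\beta\in(0,1)$; hence $c(w,\gamma)\to+\infty$ as $\gamma\to-\infty$. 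Thus the infimum can be restricted to a compact set $\mathcal{W}\times[\gamma_-,\gamma_+]$, and continuity of $c$ together with Weierstrass delivers a minimizer.

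For \textbf{uniqueness}, the objective $c$ is convex on $\mathbb{R}^n\times\mathbb{R}$---each hinge is the composition of the convex function $(\cdot)^+$ with an affine map, and the remaining terms are affine or quadratic convex---and is strictly convex in $w$ because of $\rho\|x-w\|_2^2$ with $\rho>0$. If $(w_1,\gamma_1)\ne(w_2,\gamma_2)$ were two distinct minimizers, convexity would make their midpoint a minimizer too, and strict convexity in $w$ would yield a strict inequality whenever $w_1\ne w_2$, forcing $w_1=w_2=:w^*$. It then remains to pin down $\gamma$: the one-dimensional map $\phi(\gamma):=c(w^*,\gamma)$ is convex piecewise linear, with slope monotonically increasing from $-\lambda_3\beta/(1-\beta)<0$ far left to $\lambda_3>0$ far right, so the minimum is attained at the unique breakpoint where the one-sided subdifferentials straddle zero.

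I expect the last step---uniqueness of $\gamma$---to be the main obstacle, since a convex piecewise-linear function can in principle exhibit a plateau of minimizers whenever $m(1-\beta)$ coincides with the count of hinge terms active at the optimum; ruling this degeneracy out cleanly is the only non-mechanical part of the argument and may implicitly require a generic-position assumption on the data $\{d_j\}$.
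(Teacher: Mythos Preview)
Your existence argument via two-sided coercivity in $\gamma$ together with Weierstrass is correct and arguably cleaner than the paper's route. The paper instead manufactures an explicit uniform lower bound on $c(w,\gamma)$ through the elementary inequality $\max(t,u-at)\ge u/(1+a)$ for $a>0$, applied termwise with $t=\gamma/m$, $a=\beta/(1-\beta)$, and $u=-d_j^Tw/(m(1-\beta))$, arriving at $c(w,\gamma)\ge -\tfrac{\lambda_3}{m\beta}\|U\|\,\|\sum_j d_j\|$. Strictly speaking a lower bound alone does not guarantee a minimizer on an unbounded domain, so your coercivity argument is actually the more complete of the two for existence. For uniqueness of $w_*$ both approaches invoke the strict convexity in $w$ coming from $\rho\|x-w\|_2^2$.

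Your hesitation about $\gamma$-uniqueness is well placed, and you should know that the paper does not resolve it either: after fixing $w_*$ it only \emph{localizes} an optimal $\gamma_*$ at one of the breakpoints $-w_*^Td_{j_*}$ (or at zero), without excluding the plateau you describe when $m(1-\beta)$ coincides with the active-hinge count on some segment. So your proposal is at least as complete as the paper's own argument on this point. One thing the paper's characterization does buy, however, is the a priori bound $|\gamma_*|\le \|w_*\|\max_j\|d_j\|$, which is used downstream (Lemma~\ref{lem: BCD_boundedness}) to control the BCD iterates uniformly in $\rho$; your coercivity argument does not directly deliver this, so if you follow your route you would still want to record that any minimizing $\gamma_*$ lies in the interval $[-M,M]$ with $M=\max_j|d_j^Tw_*|$.
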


The KKT conditions for $w_*$ are:
\begin{equation*}\label{pw_kkt}
\frac{- \lambda_3}{m(1-\beta)}
\sum_{j=1}^m d_j
\partial \left(.)^+\right|_{-d_j^Tw_*-\gamma}
+2\rho(w_*-x)=0, \quad \textbf{e}^Tw_*=1, \quad \mbox{and} \quad L\le w_*\le U.
\end{equation*}
Eventually, we point out that this convex problem can be solved using a standard solver because it can be transformed into the following convex quadratic program with linear constraints:
\begin{equation} \label{pr: pw-equivalent} %-
\begin{aligned}
\min_{w\in \mathbb{R}^n,\gamma\in \mathbb R, t\in \mathbb R^m}\quad & \ 
 \lambda_3 (\gamma+\frac{1}{m(1-\beta)}
\sum_{j=1}^m t_j)+\rho\|x-w\|_2^2
\\ \textrm{subject to} \quad & 
 \textbf{e}^Tw=1, \quad 
L\le w\le U,    \quad 
t_j\ge 0, \quad \text{and} \quad  
t_j\ge -d_j^Tw-\gamma;\, \,  \forall j\in [m].
\end{aligned}
\end{equation}

\section{Convergence Analysis} \label{sec: convergence}
In this section, we begin by examining Algorithm \ref{algo: BCD} to address the penalty subproblem (\ref{pr: p-x,y,z,w,gam}) with a fixed $\rho\ge 1$. We demonstrate its efficacy in locating a saddle point for this subproblem. Subsequently, our attention shifts to Algorithm \ref{algo: PD_method_p}, where we illustrate its capability in identifying a convergent subsequence that converges to a Lu--Zhang minimizer of the original problem (\ref{pr: p-original}).

\subsection{Analysis of Algorithm \ref{algo: BCD}}
We analyze a sequence $\{(x_l,y_l,z_l,w_l,\gamma_l)\}$ generated by Algorithm \ref{algo: BCD} and provide a customized proof that any such sequence obtains a saddle point of (\ref{pr: p-x,y,z,w,gam}). This justifies the use of Algorithm \ref{algo: BCD} for this nonconvex problem.
\begin{lemma} \label{lem: BCD_boundedness}
Let  $\lambda_1$ and $\lambda_2 \ge 0,$ with $\lambda_3=1-\lambda_1-\lambda_2\ge 0, \beta\in (0,1), \delta\ge 0$, $\eta, L, U \in \mathbb R^n$ with $L< U$ element-wise, $d_j\in \mathbb R^n$ for each $j\in [m], A\succeq 0,$ and $\rho\ge 1$.  Consider the iterates of Algorithm \ref{algo: BCD}. Then, we have 
\begin{equation*}% \label{eqn: xyz_ bounded above}
    \max\{\|x_{l}\|,\|y_l\|, \|z_{l}\|,\|w_l\|, |\gamma_l|\}\le C,
\end{equation*}
where $C>0$ is a function of the parameters of (\ref{pr: p-original}), except for $\rho$.
\end{lemma}
This lemma proves that any sequence formed by Algorithm \ref{algo: BCD} is bounded; specifically, this bound is independent of $\rho$ whenever $\rho\ge 1$, which is the case for our penalty decomposition algorithm. Hence, every sequence produced by Algorithm \ref{algo: BCD} possesses at least one accumulation point.
The next theorem further confirms that each accumulation point is a saddle point of (\ref{pr: p-x,y,z,w,gam}).

{\color{black}
We note that a saddle point (block-coordinate minimizer) refers to a point 
$(x_*,y_*,z_*,w_*,\gamma_*)$ that minimizes $q_\rho$ with respect to 
each block separately while keeping the other blocks fixed (see~\eqref{bcd_saddle}). 
This notion is generally weaker than full first-order stationarity or the KKT 
conditions for the joint problem. If $q_\rho$ is continuously differentiable and each block-feasible set
$\mathcal X,\mathcal Y,\mathcal Z,\mathcal W$ is closed and convex,
then~\eqref{bcd_saddle} is equivalent to the first-order optimality
condition
\[
0 \in \nabla q_\rho(x_*,y_*,z_*,w_*,\gamma_*) \;+\;
N_{\mathcal X\times\mathcal Y\times\mathcal Z\times\mathcal W\times\mathbb R}
(x_*,y_*,z_*,w_*,\gamma_*),
\]
where $N_C(u)$ denotes the (convex) normal cone of a closed convex set
$C$ at $u$, so the limit point is (Clarke/KKT) stationary.
In our setting, however, $\mathcal Y$ is nonconvex due to the cardinality
constraint, so we do not claim full stationarity; Theorem~\ref{thm: BCD_convergence} asserts convergence to a saddle point (block-coordinate minimizer).

\begin{theorem} \label{thm: BCD_convergence}
Let $\{(x_l,y_l,z_l,w_l,\gamma_l)\}$ be a sequence generated by Algorithm \ref{algo: BCD} for solving (\ref{pr: p-x,y,z,w,gam}). First, $\left\{q_{\rho}(x_{l},y_{l},z_{l}, w_l, \gamma_l)\right\}$ is a non-increasing sequence. Second, any accumulation point $(x_*,y_*,z_*,w_*,\gamma_*)$ of $\{(x_l,y_l,z_l,w_l,\gamma_l)\}$ is a saddle point of the nonconvex problem (\ref{pr: p-x,y,z,w,gam}), that is, 
\begin{equation} \label{bcd_saddle}
\begin{aligned} 
x_* &\in \arg\min_{x\in\mathcal X} q_\rho(x,y_*,z_*,w_*,\gamma_*),\\
y_* &\in \arg\min_{y\in\mathcal Y} q_\rho(x_*,y,z_*,w_*,\gamma_*),\\
z_* &\in \arg\min_{z\in\mathcal Z} q_\rho(x_*,y_*,z,w_*,\gamma_*),\\
(w_*,\gamma_*) &\in \arg\min_{\substack{w\in\mathcal W\\ \gamma\in\mathbb R}} q_\rho(x_*,y_*,z_*,w,\gamma).
\end{aligned}
\end{equation}
\end{theorem}
\begin{proof}
By observing the definitions of $x_{l+1}, y_{l+1}, z_{l+1},$ and $(w_{l+1},\gamma_{l+1})$ in steps 6--9 of Algorithm \ref{algo: BCD}, we obtain
\begin{eqnarray} \label{eqn: q_xyz-inequality} %-
  q_{\rho}(x_{l+1},y_{l+1},z_{l+1}, w_{l+1}, \gamma_{l+1}) & \le & q_{\rho}(x_{l+1},y_{l+1},z_{l+1}, w, \gamma); \qquad \forall w\in \mathcal W \ \mbox{and} \ \forall \gamma \in \mathbb R, \notag
  \\
    q_{\rho}(x_{l+1},y_{l+1},z_{l+1}, w_{l}, \gamma_{l}) & \le & q_{\rho}(x_{l+1},y_{l+1},z, w_{l}, \gamma_{l}); \qquad \forall z\in \mathcal Z, \notag
  \\ 
  q_{\rho}(x_{l+1},y_{l+1},z_l, w_{l}, \gamma_{l}) &\le &  q_{\rho}(x_{l+1},y,z_{l}, w_{l}, \gamma_{l}); \qquad \forall y\in \mathcal Y,\notag
  \\
  q_{\rho}(x_{l+1},y_{l},z_{l}, w_{l}, \gamma_{l})  &\le &  q_{\rho}(x,y_{l},z_l, w_{l}, \gamma_{l}); \qquad \forall x\in \mathcal X.
\end{eqnarray}
This leads to the following ($\forall l \in \mathbb N$):
\begin{equation*}
\begin{aligned}
q_{\rho}(x_{l+1},y_{l+1},z_{l+1}, w_{l+1}, \gamma_{l+1}) 
   & \le  q_{\rho}(x_{l+1},y_{l+1},z_{l+1},w_{l}, \gamma_{l})\\
   & \le q_{\rho}(x_{l+1},y_{l+1},z_{l},w_{l}, \gamma_{l})\\
     & \le q_{\rho}(x_{l+1},y_{l},z_{l},w_{l}, \gamma_{l})\\
   & \le q_{\rho}(x_{l},y_{l},z_{l},w_{l}, \gamma_{l}),
\end{aligned}
\end{equation*}
which shows that $q_{\rho}(x_{l},y_{l},z_{l}, w_l, \gamma_l)$ is a non-increasing sequence. 
Furthermore, by Lemma~\ref{lem: BCD_boundedness}, the iterates
$\{(x_l,y_l,z_l,w_l,\gamma_l)\}_{l\in\mathbb{N}}$ remain in a compact set.
Since $q_\rho$ is continuous, the sequence of objective values
$\{q_\rho(x_l,y_l,z_l,w_l,\gamma_l)\}_{l\in\mathbb{N}}$ is bounded below. Together with the fact established earlier that
$\{q_\rho(x_l,y_l,z_l,w_l,\gamma_l)\}_{l\in\mathbb{N}}$ is non-increasing,
it follows that this sequence converges.

Next, let $(x_*,y_*,z_*,w_*,\gamma_*)$ be an accumulation point of $\{(x_l,y_l,z_l,w_l,\gamma_l)\}_{l\in\mathbb N}$; i.e., along some infinite index set $\bar{\mathcal L}\subset\mathbb N$ we have $(x_l,y_l,z_l,w_l,\gamma_l)\to(x_*,y_*,z_*,w_*,\gamma_*)$. Since $\mathcal X,\mathcal Y,\mathcal Z,$ and $\mathcal W$ are closed, the limit is feasible. Because $\bar{\mathcal L}$ is infinite, the shifted set $\bar{\mathcal L}-1:=\{\,l-1:\ l\in\bar{\mathcal L},\,l\ge1\,\}$ is also infinite. Replacing $l$ by $l-1$ in \eqref{eqn: q_xyz-inequality} and letting $l\to\infty$ along $\bar{\mathcal L}$, continuity of $q_\rho$ yields:
\begin{equation*} 
\begin{aligned}
\lim_{l\to \infty}q_{\rho}(x_{l+1},y_{l+1},z_{l+1}, w_l, \gamma_l) & = \lim_{l\to \infty} q_{\rho}(x_{l+1},y_{l+1},z_{l}, w_l, \gamma_l) \\
   & =  \lim_{l\to \infty} q_\rho(x_l,y_l,z_l,w_l, \gamma_l)\\
   & = \lim_{l\in \bar {\mathcal{L}}\to \infty} q_\rho(x_l,y_l,z_l,w_l, \gamma_l) \\
   & = q_\rho(x_*,y_*,z_*, w_*, \gamma_*).
\end{aligned}
\end{equation*}
Taking limits in \eqref{eqn: q_xyz-inequality} along $\bar {\mathcal{L}}$ and using continuity of $q_\rho$ gives
\begin{eqnarray*} 
  q_{\rho}(x_{*},y_{*},z_{*}, w_*, \gamma_*) & \le & q_{\rho}(x_*,y_{*},z_*, w,\gamma); \qquad \forall w\in \mathcal W \ \mbox{and} \ \forall \gamma \in \mathbb R,\notag
  \\ 
  q_{\rho}(x_{*},y_{*},z_{*}, w_*, \gamma_*) & \le & q_{\rho}(x_*,y_{*},z, w_*, \gamma_*); \qquad \forall z\in \mathcal Z,\notag
  \\ 
  q_{\rho}(x_{*},y_{*},z_{*}, w_*, \gamma_*) &\le &  q_{\rho}(x_{*},y,z_{*}, w_*, \gamma_*); \qquad \forall y\in \mathcal Y,\notag
  \\
  q_{\rho}(x_{*},y_{*},z_{*}, w_*, \gamma_*)  &\le &  q_{\rho}(x,y_{*},z_*, w_*, \gamma_*); \qquad \forall x\in \mathcal X.
\end{eqnarray*}
Equivalently, \eqref{bcd_saddle} holds. Thus, $(x_*,y_*,z_*,w_*,\gamma_*)$ is a saddle point (block-coordinate minimizer) of (\ref{pr: p-x,y,z,w,gam}).
\end{proof}
}

\subsection{Analysis of Algorithm \ref{algo: PD_method_p}}
Here, we establish that our proposed customized penalty decomposition Algorithm \ref{algo: PD_method_p} obtains a Lu--Zhang minimum of the original nondifferentiable nonconvex problem (\ref{pr: p-original}). 
{\color{black}
For our analysis, we need Robinson's constraint qualification. Specifically, consider the general problem
\begin{equation}\label{prob:general}
\min_{x \in \mathcal X}\, F(x)
\qquad \text{subject to}\qquad G(x)\le 0,\quad H(x)=0,\quad \text{and} \quad \|x\|_0 \le k,
\end{equation}
where $\mathcal X \subseteq \mathbb R^n$ is a closed convex set, and $G:\mathbb R^n\to\mathbb R^m$, $H:\mathbb R^n\to\mathbb R^p$ are continuously differentiable. 
Let $x^*$ be a feasible point. Choose an index set $\mathcal L\subseteq\{1,\dots,n\}$ with $|\mathcal L|=k$ such that $x^*_j=0$ for all $j\notin\mathcal L$. Define the active inequality set 
\[A(x^*) := \{\, i \in \{1,\dots,m\} : G_i(x^*) = 0 \,\}.\]
With $\overline{\mathcal L} = \{1,\dots,n\}\setminus\mathcal L$, Robinson's constraint qualification at $x^*$ is the surjectivity condition below:
\begin{equation*}
\left\{
\begin{bmatrix}
G'(x^*)d - v\\[4pt]
H'(x^*)d\\[4pt]
(I_{\overline{\mathcal L}})^{\!T} d
\end{bmatrix}
:\; d\in T_{\mathcal X}(x^*),\; v\in\mathbb R^m,\; v_i\le 0\ \forall i\in A(x^*)
\right\}
= \mathbb R^m \times \mathbb R^p \times \mathbb R^{\,n-|\mathcal L|}.
\end{equation*}
Here $G'(x^*)$ and $H'(x^*)$ denote the Jacobians of $G$ and $H$ at $x^*$, and $I_{\overline{\mathcal L}}$ is the coordinate selection matrix extracting the zero coordinates. Further, for a closed convex set $\mathcal X$ and $x^*\in \mathcal X$, the tangent cone is
\[
T_{\mathcal X}(x^*)
=\operatorname{cl}\Big\{\, d:\ \exists\, t_k \downarrow 0,\ d_k \to d \ \text{with}\ x^*+t_k d_k \in \mathcal X \,\Big\}.
\]
% Robinson's constraint qualification ensures that the linearized constraint system is \emph{onto}, i.e., any perturbation of the inequality, equality, and support constraints can be matched by some feasible direction $d$ and slack vector $v$. This surjectivity guarantees the existence of KKT-type multipliers for problem~\eqref{prob:general} and thereby provides a rigorous foundation for first-order optimality conditions \cite{lu2013sparse}. 
%
Because all constraints in (\ref{pr: p-original}) except the sparsity constraint are affine, the linearized operator has full row span. Hence, condition~\eqref{eqn: robinson} holds automatically at any feasible point~$x^*$. Moreover, since $\mathcal X=\mathbb R^n$, we have $T_{\mathcal X}(x^*)=\mathbb R^n$. Hence, Robinson's constraint qualification for a Lu--Zhang minimizer $(x^*, \gamma^*)$ of (\ref{pr: p-original}) requires the existence of an index set $\Lcal \subseteq \{1,\dots,n\}$ with $|\Lcal|=k$ and $x^*_{\Lcal^c}=0$, ensuring that the following condition is satisfied \cite{lu2013sparse}:
\begin{eqnarray}\label{eqn: robinson} %-
 \left\{  \begin{bmatrix} 
        -d - v \\ 
        d-\bar v 
        \\
        -d-\hat v
        \\
        d-\tilde v
        \\ \textbf{e}^T d\\ d_{\Lcal^c} \end{bmatrix} \   \big{|} \
\left\{
\begin{array}{ll} 
d \in \mathbb R^{n}\\
 v \in \mathbb R^n \quad  \mbox{s.t.} \quad v_i\le 0;  \,    \forall i\in \{i\, |\, x^*_i =L_i\} \\ 
\bar v \in \mathbb R^n \quad \mbox{s.t.} \quad  \bar v_i\le 0;\,  \forall i\in \{i \, | \, x^*_i =U_i\} \\
\hat v=[\hat v_{\Gamma^+}; 0]
\in \mathbb R^{n} \quad \mbox{s.t.} \quad \hat v_i \le 0; \,\forall i\in \Gamma^+ \cap  \Lcal^c
\\
\tilde v=[\tilde v_{\Gamma^-}; 0]
\in \mathbb R^{n} \quad \mbox{s.t.} \quad \tilde v_i \le 0; \,\forall i\in \Gamma^- \cap  \Lcal^c
\end{array}\right.
       \right\}
      = \mathbb R^n  \times \mathbb R^n \times \mathbb R^{n_1} \times \mathbb R^{n_2} \times \mathbb R \times \mathbb R^{|\Lcal^c|},\notag\\
\end{eqnarray} 
with $n_1=|\Gamma^+ \cap  \Lcal^c|$ and $n_2={|\Gamma^- \cap  \Lcal^c|}$.

Note that although $x^*$ does not appear explicitly in~\eqref{eqn: robinson}, since all constraints except the cardinality constraint are affine, the condition is entirely defined at $x^*$. Specifically, (i) the Jacobians are evaluated at $x^*$, (ii) the active set depends on which inequalities are tight at $x^*$, and (iii) the support set $\mathcal L$ is chosen to match the nonzero elements of $x^*$. Hence, Robinson's constraint qualification is a \emph{local} condition at the reference point.

}

Under these Robinson's conditions, the KKT  conditions for a Lu--Zhang minimizer $(x^*, \gamma^*)$ of (\ref{pr: p-original}) are the existence of Lagrangian multipliers $(\bar t, \bar T_1,\bar T_2,\bar T_5,\bar T_6)$  with $\bar t\in \mathbb R, \bar T_i\in \mathbb R^n; \forall i\in \{1,2,5,6\},$ and $\bar \Omega \in \mathbb R^n$ with $\mathcal{ L}\subseteq [n]$ such that $|\mathcal{L}|=k$ and the following holds:
\begin{equation}  \label{eqn: actual_KKT_conditions for p} %-
\left\{
\begin{array}{ll}  
2\lambda_1 Ax^*-\lambda_2\Psi +\bar t\textbf{e}
-\bar T_5+\bar T_6
+\bar \Omega +\lambda_2\delta  
 \partial\left( \|\cdot\|_1)\right|_{x^*-\phi}-\bar T_1+\bar T_2 -\vspace{0.2cm}\\ 
\qquad \qquad \qquad \quad \qquad \qquad \qquad \qquad \qquad \qquad \qquad \qquad \ \quad \quad \quad \frac{ \lambda_3}{m(1-\beta)}\sum_{j=1}^md_j\partial \left(.)^+\right|_{-d_j^Tx^*-\gamma^*}\ni 0,
 \vspace{.1cm}
\\ 
\lambda_3-\frac{1}{m(1-\beta)}\sum_{j=1}^m
\partial \left(.)^+\right|_{-d_j^Tx^*-\gamma^*}\ni 0,
\vspace{.2cm}\\
\textbf{e}^Tx^*=1, \quad 0\le \bar T_1\perp (x^*-L)\ge 0, \quad 0\le \bar T_2\perp (U-x^*)\ge 0, \quad x^*_{{\mathcal{L}}^c}=0, \quad  \bar \Omega_{\mathcal{L}}=0, 
\vspace{0.1cm} \\
0\le \bar T_5 \perp [(x^*)^+_{\Gamma^+};0] \ge 0, \quad 
0\le \bar T_6 \perp -[(x^*)^-_{\Gamma^-};0] \ge 0,
\end{array}\right.
\end{equation}

\begin{theorem} \label{thm: PPC convergence}
Suppose that  $\lambda_1$ and $\lambda_2 \ge 0,$ with $\lambda_3=1-\lambda_1-\lambda_2>0, \beta\in (0,1), \delta\ge 0$, $\eta, L, U \in \mathbb R^n$ with $L< U$ element-wise, $d_j\in \mathbb R^n$ for each $j\in [m], A\succeq 0,$ and $\rho\ge 1$.
Let  $\left\{\left(x^{(j)}, y^{(j)},z^{(j)},w^{(j)},\gamma^{(j)}\right)\right\}$  be a sequence generated by Algorithm \ref{algo: PD_method_p} for solving (\ref{pr: p-original}). Then, the following holds:
 \begin{itemize}
  \item [(i)] $\left\{\left(x^{(j)}, y^{(j)},z^{(j)},w^{(j)},\gamma^{(j)}\right)\right\}$ has a convergent subsequence whose accumulation point  $(x^*, y^*,z^*, w^*, \gamma^*)$ satisfies $x^*=y^*=z^*=w^*$. Further, there exists an index subset $\Lcal\subseteq [n]$ with $|\Lcal|=k$ such that $x^*_{\Lcal^c}=0$.
  \item [(ii)] Suppose that Robinson's constraint qualification condition given in (\ref{eqn: robinson}) holds at $(x^*,\gamma^*)$ with the index subset $\Lcal$ indicated above. Then, $(x^*,\gamma^*)$ is a Lu--Zhang minimizer of (\ref{pr: p-original}).
 \end{itemize}
\end{theorem}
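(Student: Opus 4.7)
The strategy is to invoke Lemma \ref{lem: BCD_boundedness} for boundedness, extract a convergent subsequence, and then use the penalty mechanism to force the split variables to coincide in the limit. Since the bound in Lemma \ref{lem: BCD_boundedness} is independent of $\rho$, the sequence $\{(x^{(j)},y^{(j)},z^{(j)},w^{(j)},\gamma^{(j)})\}$ lies in a compact set and possesses a convergent subsequence. To see that $x^{*}=y^{*}=z^{*}=w^{*}$, observe that step 13 of Algorithm \ref{algo: PD_method_p} enforces $q_{\rho^{(j)}}(x^{(j)},y^{(j)},z^{(j)},w^{(j)},\gamma^{(j)})\le\Upsilon$ for every $j$: whenever the carryover from the previous outer iterate would exceed $\Upsilon$, the algorithm resets to the feasible point $x^{\text{feas}}\in X_{\Upsilon}$, and otherwise the non-increasing property from Theorem \ref{thm: BCD_convergence} preserves the bound. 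Since the remaining non-penalty pieces of $q_{\rho^{(j)}}$ are bounded on the compact iterate set, we deduce a uniform bound on $\rho^{(j)}(\|x^{(j)}-y^{(j)}\|_{2}^{2}+\|x^{(j)}-z^{(j)}\|_{2}^{2}+\|x^{(j)}-w^{(j)}\|_{2}^{2})$; combined with $\rho^{(j)}\to\infty$ this forces all three splitting differences to vanish, so $x^{*}=y^{*}=z^{*}=w^{*}$.

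\textbf{Plan for the cardinality identification.} Each $y^{(j)}\in\mathcal{Y}$ has support in the finite collection of subsets of $[n]$ of size at most $k$. A standard pigeonhole argument, passing to a further subsequence, yields a fixed index set $\Lcal$ with $|\Lcal|=k$ (padding with arbitrary indices if necessary) such that $y^{(j)}_{\Lcal^{c}}=0$ along the subsequence. Taking limits and using $x^{*}=y^{*}$ from part (i) gives $x^{*}_{\Lcal^{c}}=0$.

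\textbf{Plan for part (ii).} At each outer iteration, Theorem \ref{thm: BCD_convergence} delivers a saddle point of (\ref{pr: p-x,y,z,w,gam}), so $x^{(j)},y^{(j)},z^{(j)},(w^{(j)},\gamma^{(j)})$ minimize their respective subproblems and satisfy the KKT systems (\ref{eqn: KKT _Px}), the optimality characterization of $\Scal_{k,\eta}$ for $y$, (\ref{eqn: pz_KKT}), and (\ref{pw_kkt}). Introducing the scaled splitting multipliers $\sigma_{y}^{(j)}:=2\rho^{(j)}(x^{(j)}-y^{(j)})$, $\sigma_{z}^{(j)}:=2\rho^{(j)}(x^{(j)}-z^{(j)})$, $\sigma_{w}^{(j)}:=2\rho^{(j)}(x^{(j)}-w^{(j)})$, one rewrites the $y$-, $z$- and $(w,\gamma)$-KKT equations as expressions for $\sigma_{y}^{(j)},\sigma_{z}^{(j)},\sigma_{w}^{(j)}$ in terms of the remaining multipliers: subdifferential selections of $\|\cdot\|_{1}$ and $(\cdot)^{+}$, sign multipliers $T_{5}^{(j)},T_{6}^{(j)}$, box multipliers $T_{1}^{(j)},T_{2}^{(j)},\tilde{T}_{1}^{(j)},\tilde{T}_{2}^{(j)}$, the sparsity multiplier $\Omega^{(j)}$ supported on $\Lcal^{c}$, and the equality multipliers $t^{(j)},\bar{t}^{(j)}$. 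Substituting into the $x$-KKT and extracting a further subsequence along which every bounded multiplier converges, the limit equation reproduces the KKT system (\ref{eqn: KKT _conditions for p}) under the natural identifications $\bar{T}_{1}=T_{1}^{*}+\tilde{T}_{1}^{*}$, $\bar{T}_{2}=T_{2}^{*}+\tilde{T}_{2}^{*}$, $\bar{t}=t^{*}+\bar{t}^{*}$, $\bar{T}_{5}=T_{5}^{*}$, $\bar{T}_{6}=T_{6}^{*}$, $\bar{\Omega}=\Omega^{*}$. Since Robinson's condition (\ref{eqn: robinson}) holds at $(x^{*},\gamma^{*})$, these limiting KKT conditions are sufficient for $(x^{*},\gamma^{*})$ to be a local minimizer of (\ref{pr: p-original}) by the cardinality-constrained local-minimality theory of \cite{lu2012sparse}.

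\textbf{The main obstacle.} The crucial technical point is the uniform boundedness of the scaled splitting multipliers $\sigma_{y}^{(j)},\sigma_{z}^{(j)},\sigma_{w}^{(j)}$: each is a product of $\rho^{(j)}\to\infty$ and a difference that vanishes by part (i), so boundedness is not automatic. It must be extracted from the closed-form formulas in Subsection \ref{subsec: detaile_subproblems}, the bound $\rho^{(j)}\|x^{(j)}-y^{(j)}\|_{2}^{2}=O(1)$ obtained in part (i), and a careful index-by-index analysis of active versus inactive constraints (in particular, for $i\notin\Lcal$ one must argue $x^{(j)}_{i}=O(1/\rho^{(j)})$ from the closed form $y^{(j)}=\Scal_{k,\eta}(x^{(j)})$). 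Once boundedness is in hand, outer semicontinuity of $\partial\|\cdot\|_{1}$ and $\partial(\cdot)^{+}$ makes the passage to the limit standard.
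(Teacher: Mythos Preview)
Your plan for part (i) and the support identification is correct and matches the paper's argument.

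For part (ii), however, there is a genuine gap in your proposed route to multiplier boundedness. You correctly identify that the boundedness of $\sigma_y^{(j)},\sigma_z^{(j)},\sigma_w^{(j)}$ (equivalently, of the Lagrange multipliers $t^{(j)},T_i^{(j)},\Omega^{(j)}$) is the crux, but your proposed tools do not deliver it. The penalty bound from part (i) gives only $\rho^{(j)}\|x^{(j)}-y^{(j)}\|_2^2=O(1)$, hence $\|x^{(j)}-y^{(j)}\|_2=O(1/\sqrt{\rho^{(j)}})$, and therefore $\sigma_y^{(j)}=2\rho^{(j)}(x^{(j)}-y^{(j)})=O(\sqrt{\rho^{(j)}})$ at best --- this does \emph{not} imply boundedness. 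Similarly, the closed form $y^{(j)}=\Scal_{k,\eta}(x^{(j)})$ tells you $y^{(j)}_{\Lcal^c}=0$ but says nothing about the size of $x^{(j)}_{\Lcal^c}$ itself, so you cannot infer $x^{(j)}_{i}=O(1/\rho^{(j)})$ for $i\notin\Lcal$ from that formula alone. The box and equality multipliers $T_1^{(j)},\dots,T_4^{(j)}$ and $t^{(j)}$ are likewise not controlled by the subproblem KKT systems in isolation.

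The paper closes this gap differently, and Robinson's condition plays a much earlier and more substantive role than you give it. After substituting the $y$-, $z$-, and $(w,\gamma)$-KKT relations into the $x$-KKT equation, the paper exploits the explicit identity $(x-y)_{\Lcal}=\big(-[x^{-}_{\Gamma^{+}};0]+[x^{+}_{\Gamma^{-}};0]\big)_{\Lcal}$ to define $T_5^{(j)},T_6^{(j)},\Omega^{(j)}$ concretely, and then proves boundedness of the entire multiplier tuple $(t^{(j)},T_1^{(j)}+T_3^{(j)},T_2^{(j)}+T_4^{(j)},T_5^{(j)},T_6^{(j)},\Omega^{(j)})$ by a \emph{contradiction argument using Robinson's condition}: assume unboundedness, normalize, pass to a limit, and use the surjectivity in (\ref{eqn: robinson}) to derive that the normalized limit vanishes. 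You invoke Robinson's condition only at the very end, for sufficiency of the KKT conditions; in the paper it is the engine that produces multiplier boundedness in the first place. Without that step your limiting argument cannot be carried out.
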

\begin{proof}
  
Due to Lemma \ref{lem: BCD_boundedness}, the sequence $\{(x^{(j)}, y^{(j)}, z^{(j)}, w^{(j)},\gamma^{(j)})\}$ is bounded and therefore, has a convergent subsequence. For our purposes, without loss of generality, we suppose that the sequence itself is convergent. Let  $\left(x^*, y^*,z^*,w^*,\gamma^*\right)$ be its accumulation point. 
Under the given assumptions, in view of a similar technique used in Lemma \ref{lem: boundedness of p_w}, we can show that 
$$ \lambda_1 {x^{(j)}}^TAx^{(j)} - \lambda_2\Psi^Tx^{(j)} +\lambda_2\delta\|z^{(j)}-\phi\|_1+ \lambda_3 (\gamma+\frac{1}{m(1-\beta)}
\sum_{j=1}^m (-d_j^Tw^{(j)}-\gamma)^+) \ge \hat C>-\infty.
$$
Thus, using definition (\ref{def: q_rho}) and step 14 of Algorithm \ref{algo: PD_method_p} leads to
$$
\rho^{(j)}\left(\|x^{(j)}-y^{(j)}\|^2+\|x^{(j)}-z^{(j)}\|^2+\|x^{(j)}-w^{(j)}\|^2 \right) \le \Upsilon -\hat C,
$$
and thus,
$$
\max\{\|x^{(j)}-y^{(j)}\|,\|x^{(j)}-z^{(j)}\|,\|x^{(j)}-w^{(j)}\|\}\le \sqrt{{(\Upsilon-\hat C)}/{\rho^{(j)}}}.
$$
Hence, $\max\{\|x^{(j)}-y^{(j)}\|,\|x^{(j)}-z^{(j)}\|, \|x^{(j)}-w^{(j)}\|\}\to 0$ when $\rho^{(j)}\to \infty$; proving that $x^*=y^*=z^*=w^*$.

Let $\mathcal{I}^{(j)} \subseteq [n]$ be defined such that $|\mathcal{I}^{(j)}| = k$, and $(y_{(\mathcal{I}^{(j)})^c})_i = 0$ for every $j \in \mathbb{N}$ and $i \in (\mathcal{I}^{(j)})^c$. Given that $\{\mathcal{I}^{(j)}\}$ is a bounded sequence of indices, it possesses a convergent subsequence. This implies the existence of an index subset $\mathcal{L} \subseteq [n]$ with $|\mathcal{L}| = k$ and a subsequence $\{(x^{(j_\ell)}, y^{(j_\ell)}, z^{(j_\ell)},w^{(j_\ell)},\gamma^{(j_\ell)})\}$ from the aforementioned convergent subsequence, such that $\mathcal{I}^{(j_\ell)} = \mathcal{L}$ for all sufficiently large $j_\ell$'s.
Consequently, because $x^* = y^*$ and $y^*_{\mathcal{L}^c} = 0$, it follows that $x^*_{\mathcal{L}^c} = 0$.

(ii)
Recall that $\{(x^{(j_\ell)}, y^{(j_\ell)}, z^{(j_\ell)},w^{(j_\ell)},\gamma^{(j_\ell)})\}$ is a saddle point of (\ref{pr: p-x,y,z,w,gam})
due to Theorem \ref{thm: BCD_convergence} and consequently, 
the KKT conditions of the problems (\ref{pr: px}), (\ref{pr: py}), (\ref{pr: pz}), and (\ref{pr: pw}) yield:

\begin{equation} \label{sequence_kkt} %-
\left\{
\begin{array}{ll}
2\lambda_1 Ax^{(j_\ell)} -\lambda_2\Psi+ 2\rho^{(j_\ell)}  \underbrace{(3x^{(j_\ell)}-y^{(j_\ell)}-z^{(j_\ell)}-w^{(j_\ell)})}_{=(x^{(j_\ell)}-y^{(j_\ell)})+(x^{(j_\ell)}-z^{(j_\ell)})+(x^{(j_\ell)}-w^{(j_\ell)})} +t^{(j_\ell)} \textbf{e} =0,
 \vspace{0.25cm} \\
y^{(j_\ell)}_\Lcal=([x^+_{\Gamma^+};0]-[x^-_{\Gamma^-};0]+[x_{\Gamma^0};0])^{(j_\ell)}_\Lcal,
 \vspace{0.25cm}
 \\
2\rho^{(j_\ell)} (z^{(j_\ell)}-x^{(j_\ell)})+\lambda_2\delta \partial\left( \|\cdot\|_1)\right|_{z^{(j_\ell)}-\phi}-T^{(j_\ell)}_1+T^{(j_\ell)}_2\ni 0, 
  \vspace{0.25cm}
\\
\frac{- \lambda_3}{m(1-\beta)}
\sum_{j=1}^m d_j
\partial \left(.)^+\right|_{-d_j^Tw^{(j_\ell)}-\gamma^{(j_\ell)}}
+2\rho(w^{(j_\ell)}-x^{(j_\ell)})-T^{(j_\ell)}_3+T^{(j_\ell)}_4\ni 0, 
\vspace{.25cm} \\
\lambda_3-\frac{1}{m(1-\beta)}\sum_{j=1}^m
\partial \left(.)^+\right|_{-d_j^Tw^{(j_\ell)}-\gamma^{(j_\ell)}}\ni 0,
\vspace{0.25cm}\\
\textbf{e}^Tx^{(j_\ell)}=1, \ \, 0\le T^{(j_\ell)}_1\perp (z^{(j_\ell)}-L)\ge 0,\ \, 0\le T^{(j_\ell)}_2\perp (U-z^{(j_\ell)})\ge 0, \ \,  0\le T^{(j_\ell)}_3\perp (w^{(j_\ell)}-L)\ge 0, \vspace{0.25cm}\\
\mbox{and} \ \, 0\le T^{(j_\ell)}_4\perp (U-w^{(j_\ell)})\ge 0.
\end{array}\right.
\end{equation}
Since for each $j_\ell$ we have 
$y^{(j_\ell)}_i\ge 0; \forall i\in \Gamma^+$ and $y^{(j_\ell)}_i\le 0; \forall i\in \Gamma^-$, from part (i), we can conclude  $x^{(j_\ell)}_i\ge 0; \forall i\in \Gamma^+$ and $x^{(j_\ell)}_i\le 0; \forall i\in \Gamma^-$.
Further, $\textbf{e}^Tx^{(j_\ell)}=1$ and $x^{(j_\ell)}-L\ge 0$ with $U-x^{(j_\ell)}\ge 0$.
Next, by injecting the second, third, and fourth equations of (\ref{sequence_kkt}) into its first one, we obtain the following:
\begin{eqnarray}\label{eqn: needed_1} %-
2\lambda_1Ax^{(j_\ell)}-\lambda_2\Psi +t^{(j_\ell)}\textbf{e}
 &+ & 
\begin{bmatrix}
 2\rho^{(j_\ell)}(x^{(j_\ell)}-y^{(j_\ell)})_{\Lcal} \\ 0 \end{bmatrix}
 + 
\begin{bmatrix}  0 \notag \\ 2\rho^{(j_\ell)}(x^{(j_\ell)})_{\Lcal^c} \end{bmatrix}
\notag \\ &+& 
\lambda_2\delta \partial
\left( \|\cdot\|_1)\right|_{z^{(j_\ell)}-\phi}-T^{(j_\ell)}_1+T^{(j_\ell)}_2
\notag \\ &-& \frac{ \lambda_3}{m(1-\beta)}
\sum_{j=1}^m d_j\partial \left(\cdot)^+\right|_{-d_j^Tw^{(j_\ell)}-\gamma}
-T^{(j_\ell)}_3+T^{(j_\ell)}_4\notag \\ &\ni & 0.
\end{eqnarray}
From the other side, and using the second equation in (\ref{sequence_kkt}), after dropping the superscript ${(j_\ell)}$, we get
\begin{eqnarray*} 
\left(x-y\right)_\Lcal & = &
\left([x_{\Gamma^+};0]+[x_{\Gamma^-};0]+[x_{\Gamma^0};0]-y\right)_\Lcal
\notag
  \\ 
 & = &  
 \left([x_{\Gamma^+};0]+[x_{\Gamma^-};0]+[x_{\Gamma^0};0]-
 [x^+_{\Gamma^+};0]+[x^-_{\Gamma^-};0]-[x_{\Gamma^0};0]
 \right)_\Lcal
 \notag
  \\ 
 &= &  
 \left([x_{\Gamma^+};0]-
 [x^+_{\Gamma^+};0]+[x_{\Gamma^-};0]+[x^-_{\Gamma^-};0]
 \right)_\Lcal
 \notag
  \\
  &= &  
\left(-[x^-_{\Gamma^+};0]+
 [x^+_{\Gamma^-};0]
 \right)_\Lcal
\end{eqnarray*}
Therefore, (\ref{eqn: needed_1}) reduces to:
\begin{eqnarray}\label{eqn: needed_2} %-
2\lambda_1Ax^{(j_\ell)}-\lambda_2\Psi +t^{(j_\ell)}\textbf{e}
 &- & 
 \underbrace{
\begin{bmatrix}
  2\rho^{(j_\ell)}{[x^-_{\Gamma^+};0]}^{(j_\ell)}_{\Lcal}
 \\ 0 \end{bmatrix}}_{:=T_5^{(j_\ell)}}
 + 
 \underbrace{
\begin{bmatrix}
  2\rho^{(j_\ell)}{[x^+_{\Gamma^-};0]}^{(j_\ell)}_{\Lcal}
 \\ 0 \end{bmatrix}}_{:=T_6^{(j_\ell)}}
 + 
\underbrace{\begin{bmatrix}  0 \notag \\ 2\rho^{(j_\ell)}(x^{(j_\ell)})_{\Lcal^c} \end{bmatrix}}_{:=\Omega^{(j_\ell)}}
\notag \\ &+& 
\lambda_2\delta \partial
\left( \|\cdot\|_1)\right|_{z^{(j_\ell)}-\phi}-T^{(j_\ell)}_1+T^{(j_\ell)}_2
\notag \\ &-& \frac{ \lambda_3}{m(1-\beta)}
\sum_{j=1}^m d_j\partial \left(\cdot)^+\right|_{-d_j^Tw^{(j_\ell)}-\gamma^{(j_\ell)}}
-T^{(j_\ell)}_3+T^{(j_\ell)}_4\notag \\ & \ni & 0.
\end{eqnarray}
Further, note that $0\le T_5^{(j_\ell)}\perp [x^+_{\Gamma^+};0]^{(j_\ell)}\ge 0$ and $0\le T_6^{(j_\ell)}\perp [x^+_{\Gamma^-};0]^{(j_\ell)}\ge 0$. 

We next show that 
$\{(t^{(j_\ell)},T_1^{(j_\ell)}+T_3^{(j_\ell)},T_2^{(j_\ell)}+T_4^{(j_\ell)},T_5^{(j_\ell)},T_6^{(j_\ell)}, \Omega^{(j_\ell)}\}$
is bounded under Robinson's condition on $(x^*,\gamma^*)$. 
Suppose not, and consider the following normalized sequence:
\[
(\wt t^{(j_\ell)}, \wt T_1^{(j_\ell)}+\wt T_3^{(j_\ell)},\wt T_2^{(j_\ell)}+\wt T_4^{(j_\ell)},\wt T_5^{(j_\ell)},\wt T_6^{(j_\ell)}, \wt \Omega^{(j_\ell)})
:= 
\frac{ (t^{(j_\ell)}, T_1^{(j_\ell)}+T_3^{(j_\ell)},T_2^{(j_\ell)}+T_4^{(j_\ell)},T_5^{(j_\ell)},T_6^{(j_\ell)}, \Omega^{(j_\ell)})} 
{ \| (t^{(j_\ell)}, T_1^{(j_\ell)}+T_3^{(j_\ell)},T_2^{(j_\ell)}+T_4^{(j_\ell)},T_5^{(j_\ell)},T_6^{(j_\ell)}, \Omega^{(j_\ell)})\|_2
}.
\]
Since this normalized sequence is bounded, it has a convergent subsequence, without loss of generality, itself, $(\wt t^{(j_\ell)}, \wt T_1^{(j_\ell)}+\wt T_3^{(j_\ell)},\wt T_2^{(j_\ell)}+\wt T_4^{(j_\ell)},\wt T_5^{(j_\ell)},\wt T_6^{(j_\ell)}, \wt \Omega^{(j_\ell)})$, whose limit is given by $(\wt t^*, \wt T_1^*+\wt T_3^*,\wt T_2^*+\wt T_4^*,\wt T_5^*,\wt T_6^*, \wt \Omega^*)$ such that $\| ({\color{black} \wt t^*},\wt T_1^*+\wt T_3^*,\wt T_2^*+\wt T_4^*,\wt T_5^*,\wt T_6^*, \wt \Omega^*)\|_2=1$.
After dividing both sides of (\ref{eqn: needed_2}) by $\| (t^{(j_\ell)}, T_1^{(j_\ell)}+T_3^{(j_\ell)},T_2^{(j_\ell)}+T_4^{(j_\ell)},T_5^{(j_\ell)},T_6^{(j_\ell)}, \Omega^{(j_\ell)})\|_2$ 
and then passing the limit $j_\ell\to \infty$, in virtue of part (i), the boundedness of the remaining terms in (\ref{eqn: needed_2}), the continuity of $\|.\|_1$ and $\max(x,0)$, and the boundedness of their subgradients, we obtain:
\begin{equation} \label{eqn:limit_condition}
   \wt t^*\textbf{e}-\wt T_5^*+\wt T_6^*+ \wt \Omega^*- \wt T_1^* + \wt T_2^*  -\wt T_3^* + \wt T_4^* \ni 0,
\end{equation}
where $\wt T^*_i\ge 0$ with $(\wt T^*_i)_{\mathcal{L}^c}=0;$ for $ i=1,2$ and  $\wt T_i^* \ge 0,$ for $i=1,2,3,4$ and $\wt \Omega^*_\Lcal=0$. In light of Robinson's condition (\ref{eqn: robinson}), there exists $(d,v,\bar v, \hat v, \tilde v)$ such that $\textbf{e}^Td=-\wt t^*, -d-\hat v=-\wt T_5^*, d-\tilde v = -T_6^*, -d-v=-\wt T^*_1-\wt T^*_3, d-\bar v=-\wt T^*_2-\wt T^*_4,$ and $d_{\mathcal{L}^c}=-\wt \Omega^*$. Since $d_{\Lcal^c} = - \wt \Omega^*_{\Lcal^c}$ and $\wt \Omega^*_\Lcal=0$, we see that $d^T \wt \Omega^* = -\| \wt \Omega^* \|^2_2$. By multiplying (\ref{eqn:limit_condition}) and using the equations above, we get the following:
$$
-(\wt t^*)^2-(\wt T^*_5-\hat v)^T \wt T^*_5+(\tilde v-\wt T^*_6)^T \wt T^*_6
-\|\wt \Omega^*\|_2^2
-(\wt T^*_1+\wt T^*_3-v)^T(\wt T^*_1+\wt T^*_3)+(\bar v-\wt T^*_2-\wt T^*_4)^T(\wt T^*_2+\wt T^*_4)=0,
$$
which leads to 
$$
\|((\wt t^*)^2,\wt T^*_5,\wt T^*_6,\wt T^*_1+\wt T^*_3, \wt T^*_2+\wt T^*_4,\wt \Omega^*)\|_2^2=
\underbrace{
\hat v^T\wt T_5^*}_{\le 0}+
\underbrace{
\tilde v^T\wt T^*_6}_{\le 0}+
\underbrace{v^T(\wt T^*_1+\wt T^*_3)}_{\le 0}+
\underbrace{\bar v^T(\wt T^*_2+\wt T^*_4)}_{\le 0};
$$
implying that $\|((\wt t^*)^2,\wt T^*_5,\wt T^*_6,\wt T^*_1+\wt T^*_3, \wt T^*_2+\wt T^*_4,\wt \Omega^*)\|_2^2=0$. This is a contradiction! Consequently, the sequence $\{(t^{(j_\ell)},T_1^{(j_\ell)}+T_3^{(j_\ell)},T_2^{(j_\ell)}+T_4^{(j_\ell)},T_5^{(j_\ell)},T_6^{(j_\ell)}, \Omega^{(j_\ell)}\}$ is bounded and has a convergent subsequence. Let $\{(\bar t,\bar T_1, \bar T_2,\bar T_5,\bar T_6,  \bar \Omega)$ be the limit point of this sequence. By passing the limit in (\ref{eqn: needed_2}) and applying the results from part (i),  one can see that the KKT conditions (\ref{eqn: actual_KKT_conditions for p}) are satisfied.

So far, we have proved that all the KKT conditions in (\ref{eqn: actual_KKT_conditions for p}) hold at $(x^*, \gamma^*)$, except for its second equation. To show this, recall the fourth equation in (\ref{sequence_kkt}):
\begin{equation*}
    \lambda_3-\frac{1}{m(1-\beta)}\sum_{j=1}^m
\partial \left(.)^+\right|_{-d_j^Tw^{(j_\ell)}-\gamma^{(j_\ell)}}\ni 0.
\end{equation*}
By taking the limit as $j_\ell \to \infty$, the boundedness of $\gamma^{(j_\ell)}$ shown in (\ref{lem: BCD_boundedness}), the continuity of $\max(x,0)$, and considering part (i), we see that  
\begin{equation*}
    \lambda_3-\frac{1}{m(1-\beta)}\sum_{j=1}^m
\partial \left(.)^+\right|_{-d_j^Tx^*-\gamma^*}\ni 0.
\end{equation*}
Therefore, we showed that all the KKT conditions hold at $(x^*, \gamma^*)$. 

Under Theorem 3.34 of \cite{ruszczynski2011nonlinear} and Theorem 2.3 of \cite{lu2013sparse}, by virtue of the linearity of constraints in (\ref{pr: p-original}), except for the cardinality constraint, we conclude that Algorithm \ref{algo: PD_method_p} converges to a Lu--Zhang minimizer of this problem under the given assumptions. 
\end{proof}

\section{Numerical Results} \label{sec: numerical}

In this section, we compare the performance of Algorithm \ref{algo: PD_method_p} with the algorithm in \cite{hamdi2024penalty}. The direct approach of solving (\ref{pr: p-original}) using CVX-Mosek is also reported for gap computation.
We use the data of the S\&P index for 2018-2021, with 120 stocks when $ \lambda_1=\lambda_2={1}/{3} $,  $ \phi=0 $,  $ U_{i}= -L_{i} = 0.2; \forall i\in [n], \delta= 0.002$, and $\beta=0.95$. Moreover, we apply the Geometric Brownian Motion (GBM) model in \cite{augustini2018} to generate scenarios.
In the PADM algorithm of \cite{hamdi2024penalty}, the initial penalty parameter is set to $1.2$, and it is updated by a factor of 3.
The inner loop is stopped when  $ ||(x^{s,l} , w^{s,l}) -(x^{s,l-1} , w^{s,l-1})||_ 1 \leq 10^{-5} $. It terminates with a partial minimum if $ ||x-  w||_1 \leq 10^{-5} $.
{\color{black} In Algorithm \ref{algo: PD_method_p}, the initial penalty parameter $\rho$ is set to $1.2$, and it is updated with the factor $r=3$. The inner loop is stopped for $\epsilon_I = 10^{-5}$, and the outer loop is terminated when the convergence criterion $\epsilon_O = 10^{-5}$ is met.}
All computations are performed in MATLAB R2017a on a 2.50 GHz laptop with 4 GB of RAM, and CVX   2.2 (\cite{grant2014cvx})  is used to solve the optimization models.
The results comparing returns,  risks,  Sharpe ratios, CVaR values, CPU times, and gaps for different $m$ values are reported in Table \ref{table1}. The gap in this table is $ {|f-\overline{f}|}/{(|f|+1)}$, where $f$ is the return (risk, Sharpe ratio, CVaR) for the direct solution approach, and $ \overline{f} $ is the return (risk, Sharpe ratio, CVaR) for the PADM algorithm or Algorithm \ref{algo: PD_method_p}.

These results show that both Algorithm \ref{algo: PD_method_p} and PADM are significantly faster than the direct solution approach. Further, Algorithm \ref{algo: PD_method_p} is about twice as fast as the PADM while having competitive gaps in returns, risks, CVaR, and Sharpe ratios for all $m$ values. These results for $m=3000$ are also depicted in  Figure \ref{KK1}  for different numbers of stocks, demonstrating the competitiveness of Algorithm \ref{algo: PD_method_p}. These results confirm that Algorithm \ref{algo: PD_method_p} is a better alternative to the direct solution approach than the PADM in \cite{hamdi2024penalty}.

\section{Conclusion} \label{sec: conclusion}
In conclusion, this paper designs a penalty decomposition algorithm customized for tackling the challenging sparse extended mean-variance-CVaR portfolio optimization problem in finance. This algorithm needs to solve a sequence of penalty subproblems. By employing a block coordinate method, we adeptly exploit every structure in the problem to manage each penalty subproblem, establishing their well-posed nature and deriving closed-form solutions wherever feasible. Our comprehensive convergence analysis demonstrates the efficacy of our introduced algorithm in efficiently reaching a Lu--Zhang minimizer of this non-differentiable and nonconvex optimization problem. Furthermore, extensive numerical experiments conducted on real-world datasets validate the practical applicability, effectiveness, and robustness of the algorithm we introduced across various evaluation criteria. Overall, this research contributes significantly to the field of portfolio optimization by offering a novel and efficient solution approach with promising practical implications in finance.

\begin{figure}[h]
	\centering
	\centering
	\subfloat[Return]{\includegraphics[width=7cm,height=5.2cm]{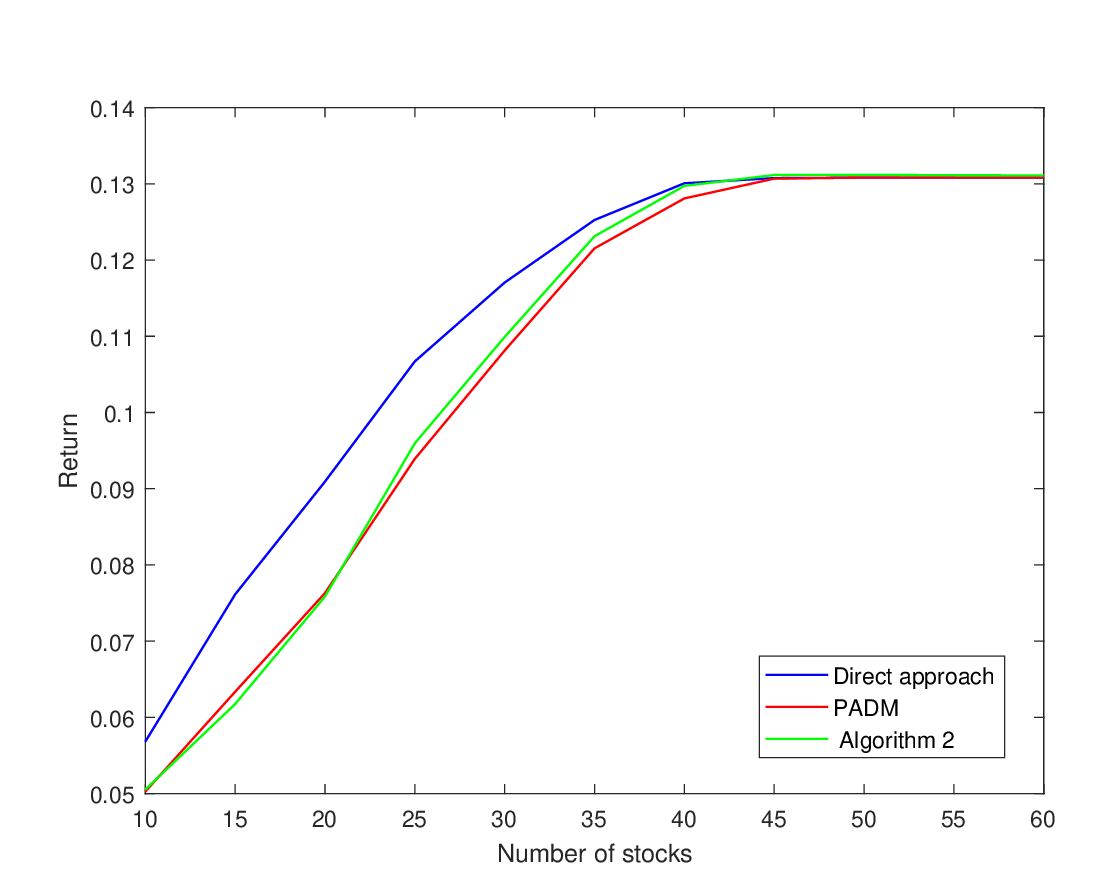}}
	\subfloat[Risk]{\includegraphics[width=7cm,height=5.2cm]{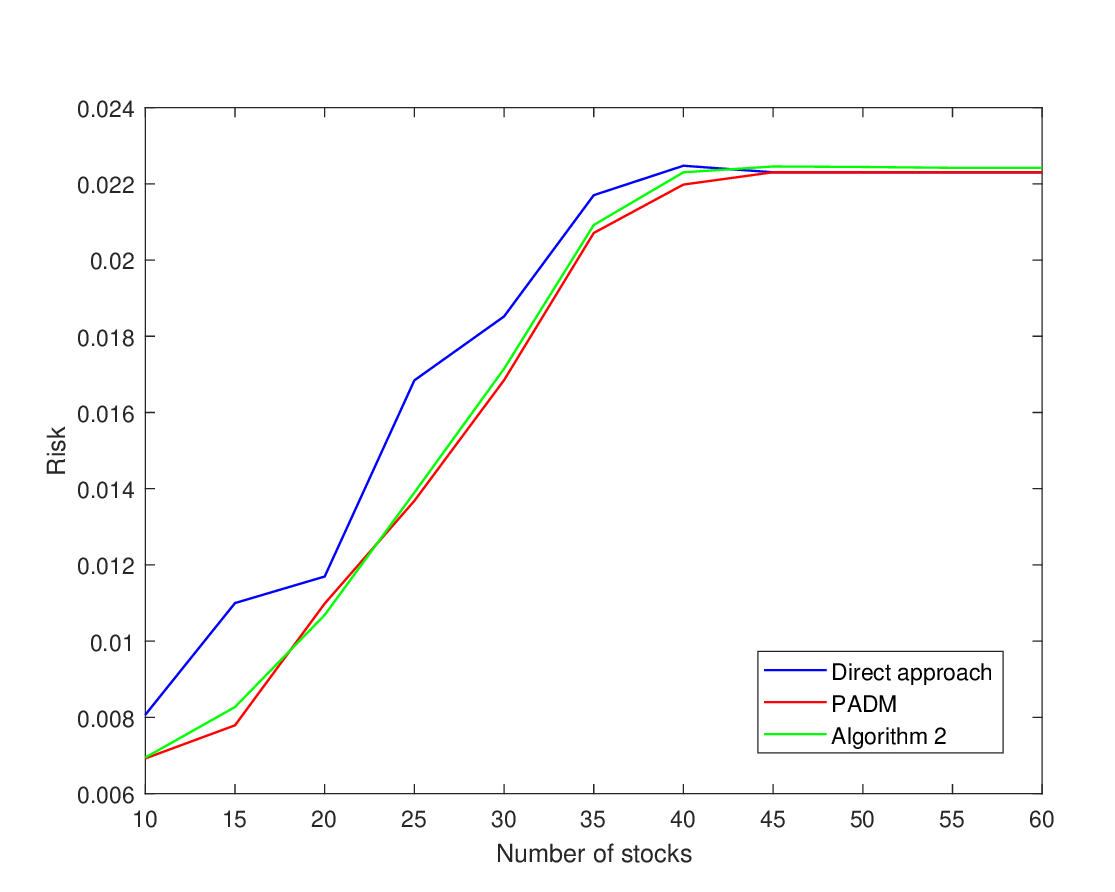}}\\
	\centering
	\subfloat[Sharpe ratio]{\includegraphics[width=7cm, height=5.2cm]{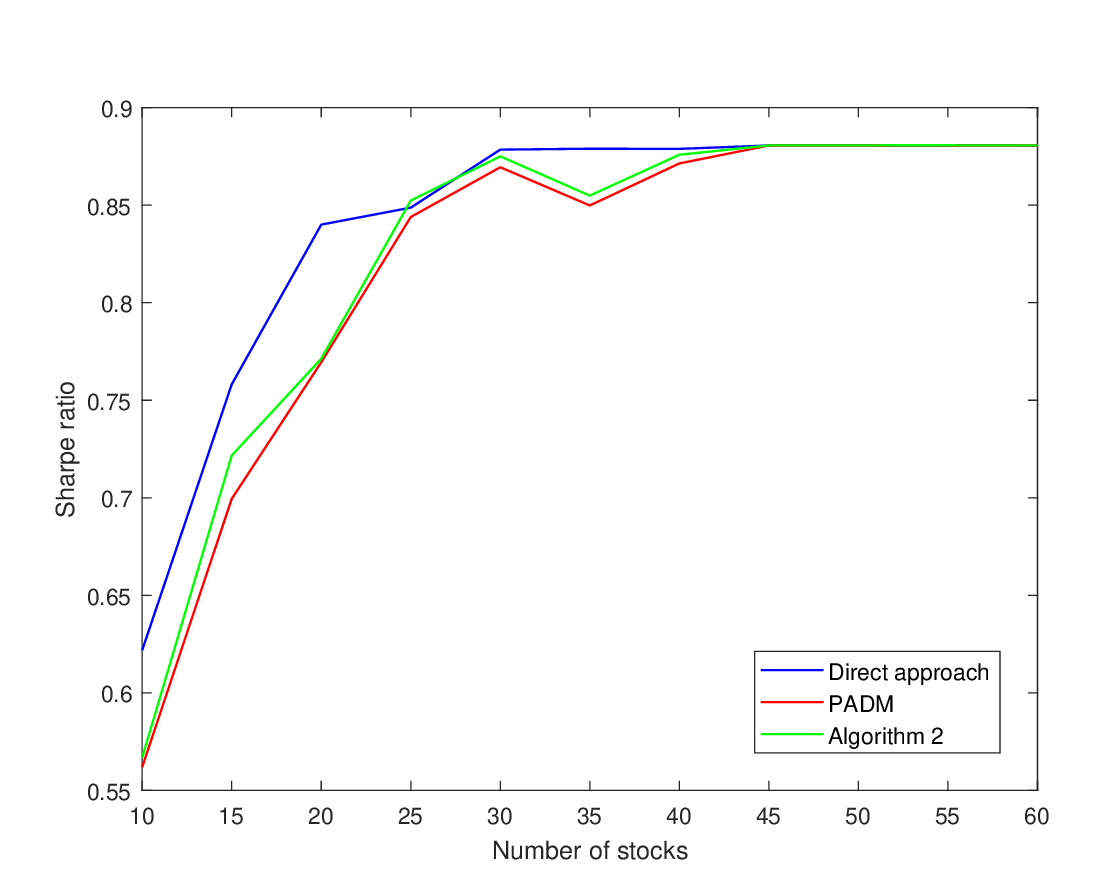}}
	\subfloat[CVaR]{\includegraphics[width=7cm,height=5.2cm]{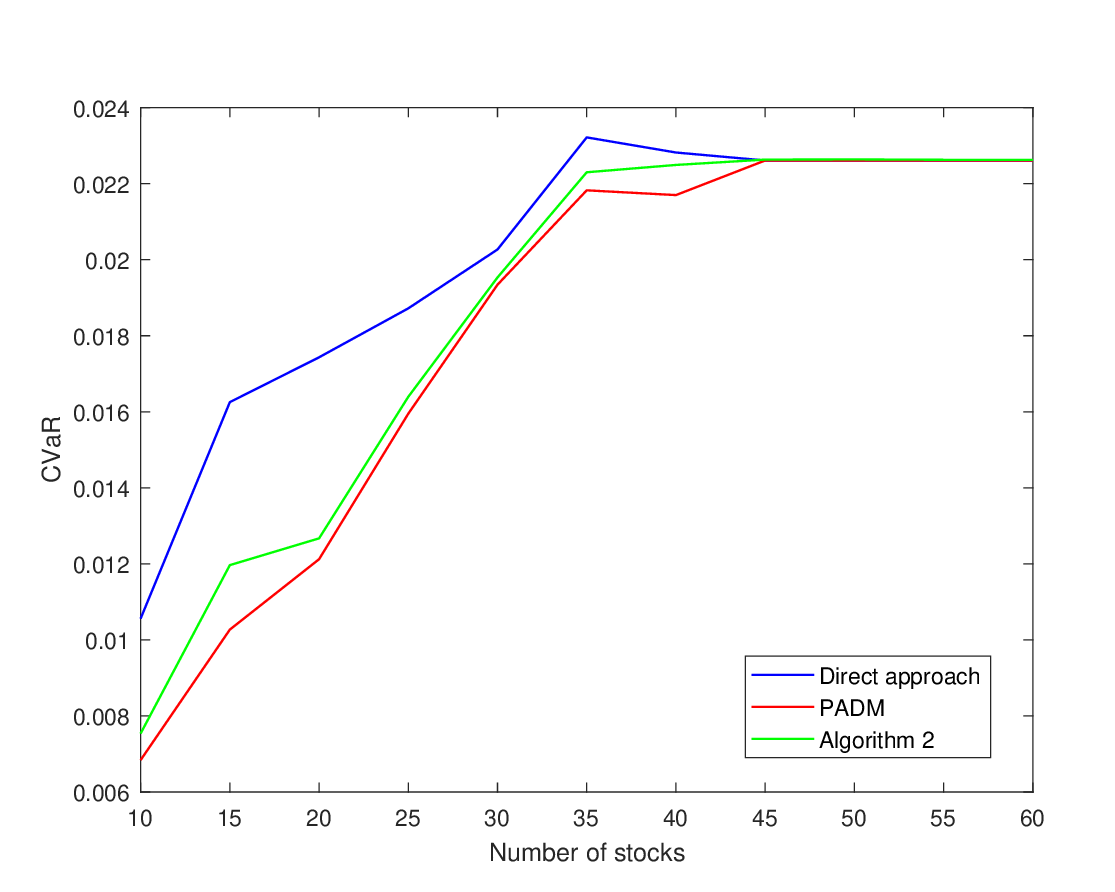}}\\
	\caption{Comparison of returns, risks,  Sharpe ratios, and CVaR for different $k$ values for 130 stocks of the S\&P index.
			\label{KK1}}
\end{figure}

\begin{center}
	\begin{table}[h]
    \centering
		\caption{ \small{Comparison of returns, risks, Sharpe ratios, CVaR and CPU times for the direct approach, PADM, and Algorithm \ref{algo: PD_method_p}  for different $ m $ values for S\&P index data with different confidence levels $ \beta $ and different number of scenarios $ m $ when $ \lambda_1= \lambda_2= \dfrac{1}{3}$, and $k=30$.}}
		\label{table1}
		\scalebox{0.85}{
			\begin{tabular}{lllllllllllllllllllll}
				\hline
				&&& $ \beta=0.95 $  \vspace{-0.5cm}
 &&&&\\
				%				\cline{3-4}\cline{6-7}\cline{9-10} \cline{12-13}
				  &  & &&&\\
				\cline{3-8}
				Model &   & m=1000 & m=3000 & m=5000 \\
				\hline
				\multirow{11}{*}{Direct approach}\\
				& Return  &0.1202 & 0.1195 & 0.1178  \\
				\\
				& Risk   & 0.0201& 0.0188 & 0.0161  \\
				\\
				& Sharpe ratio  & 0.8470 & 0.8728 & 0.0242   \\
				\\
			&  CVaR   & 0.0191 & 0.0211 &  0.0194    \\
		\\
		& CPU time   & 472.8531 & $ >  $2.0029e+03 & $  > $ 2.0029e+03   \\
				\\
				\hline
				\multirow{19}{*}{PADM}\\
	& Return  & 0.1134 & 0.1087 & 0.1109 \\
	\\
	& Risk&   0.0169 & 0.0159 & 0.0144  \\
	\\
	& Sharpe ratio   & 0.8709  & 0.8625 &  0.9242 \\
	\\
	&  CVaR &  0.0177 & 0.0183 & 0.0217  \\
	\\
	& CPU time  &  53.2698&  119.9078 & 312.8421\\
	\\
		& Return gap  & 0.0060 & 0.0097 & 0.0062 \\
	\\
		& Risk gap   & 0.0031 & 0.0028 & 0.0016 \\
	\\
		& Sharpe ratio gap & 0.0130 & 0.0055 &  0.0030  \\
	\\
		& CVaR gap  & 7.4739e-04 & 0.0019 & 0.0023 \\
	\\					\hline
				\multirow{19}{*}{Algorithm \ref{algo: PD_method_p}}\\
		& Return   & 0.1151 & 0.1108 & 0.1111 \\
			\\
			& Risk & 0.0173 & 0.0162 &  0.0147  \\
			\\
			& Sharpe ratio  & 0.8755 & 0.8695 &0.9174 \\
			\\
			&  CVaR   & 0.0188 & 0.0197 & 0.0224 \\
			\\
			& CPU time   & 38.4394 & 58.4394  & 61.2368 \\
			\\
			& Return gap   & 0.0045 & 0.0078 &  0.0060\\
			\\
			& Risk gap  & 0.0028 & 0.0025 & 0.0014\\
			\\
			& Sharpe ratio gap   &0.0154 & 0.0018 & 0.0065\\
			\\
			& CVaR gap  & 3.0090e-04 & 0.0015 & 0.0018 \\
			\\	
			\end{tabular}
		}
	\end{table}
\end{center}

\printbibliography

\section{Appendix} \label{sec: appendix}
\subsection{Proof of Lemma \ref{lem: solution_p_x}}
Technical proofs are reported below.
\begin{proof}
    Since $A\succeq 0$ and $\rho>0$, we have $A+3\rho I \succ 0$. Hence, this strictly convex problem has a unique solution, and for $t\in \mathbb R$, its KKT conditions are as follows:
\begin{equation} \label{eqn: KKT _Px}
2\lambda_1 Ax_* -\lambda_2\Psi+ 2\rho (3x_*-y-z-w) +t \textbf{e} =0, \quad \mbox{and} \quad \textbf{e}^Tx_*=1.
\end{equation}
Thus,  $2(\lambda_1A+3\rho I)x_*=2\rho (y+z+w)+\lambda_2\Psi-t \textbf{e}$. 
In virtue of (\ref{eqns: B and vartheta}), we see  $x_*=0.5 B^{-1}(  \vartheta-t \textbf{e})$, which, together with $1=\textbf{e}^Tx_*=0.5\textbf{e}^TB^{-1}(\vartheta -t \textbf{e})$, implies
\begin{equation*}
t = \frac{-1+0.5\textbf{e}^TB^{-1} \vartheta}{0.5\textbf{e}^TB^{-1}\textbf{e}},
\end{equation*}
This leads to (\ref{eqn: x_*}).
\end{proof}
\subsection{Proof of Lemma \ref{lem: solution_p_y}}
\begin{proof}
For any $y$ with $\| y \|_0 \le k$, we can write $y=[y_\Ical; y_{\Ical^c}]$ such that $y_\Ical =0$ for some index set $\Ical \subseteq [n]$ with $|\Ical|=n-k$. Hence, for any index set $\Ical$ with $|\Ical| =n-k$, $(P_y)$ can be written as the following indexed problem:
$$
\min_{ y \in \mathbb R^n} (\|  x_\Ical-y_\Ical \|^2_2+ \|  x_{\Ical^c}- y_{\Ical^c} \|^2_2) \qquad \text{subject to} \qquad  \sgn(\eta_i)y_i\ge 0; \quad \forall i,  \quad \text{and}  \quad y_{\Ical}=0, 
$$
which is equivalent to solving:
$$
\min_{\theta \in \mathbb R^{|\Ical^c|}} \| \theta -x_{\Ical^c} \|^2_2 \qquad \, \ \text{subject to} \,\qquad \sgn(\eta_i)\theta_i\ge 0; \quad \forall i\in \Ical^c.
$$
For any $i\in \Gamma^0 \cap \Ical^c$, we do not have any constraints, which  shows that $(\theta_*)_i=x_i$ for all $i\in  \Gamma^0  \cap \Ical^c$.
Thus, by letting $\Gamma=\supp(\mu)=\Gamma^+\cup \Gamma^-$,  it suffices to focus only on the following:
$$
\min_{\nu\in \mathbb R^{|\Gamma\cap \Ical^c|}} \| \nu-x_{\Gamma \cap\Ical^c} \|^2_2 \qquad \, \ \text{subject to} \,\qquad \sgn(\eta_i)\nu_i\ge 0; \quad \forall i\in \Gamma\cap \Ical^c.
$$
The KKT conditions for the global minimizer $\nu_*$ of this strictly convex problem can be written as: 
$$ x_{\Gamma \cap \Ical^c}=\nu_* - \zeta  \circ (\sgn(\eta))_{\Gamma \cap \Ical^c}
\quad \text{and} \quad 
\nu_*\circ (\sgn(\eta))_{\Gamma \cap \Ical^c}\ge 0, 
\quad 
\text{with}
\quad
\zeta  \ge 0
\quad 
\text{and}
\quad 
\zeta ^T(\sgn(\eta)\circ \nu_*)=0.
$$
Let $a=(\sgn(\eta)\circ x)_{\Gamma \cap\Ical^c}$. then it is known  that $a=a^+-a^-$ for  $a^+=\max(a,0)$, $a^-=\max(-a,0)$, and $a^{+}\perp a^-$. It is easy to see that $(\nu_*)_i={a_i^+}/{\sgn(\eta_i)}$ and $\zeta _i = {a^-_i}$ for any $i\in \Gamma\cap \Ical^c$, that is, 
$$
\nu_*=\sgn(\eta)\circ (\sgn(\eta)\circ x)^+_{\Gamma \cap\Ical^c}
\qquad
\text{and} 
\qquad
\zeta  = (\sgn(\eta)\circ x)^-_{\Gamma \cap\Ical^c}
$$
satisfy the given KKT conditions above. By recalling $\Gamma^+$ and $\Gamma^-$ defined  inside (\ref{pr: p-original}), we see
$$
(\nu_*)_{\Gamma^+\cap \Ical^c} =  x^+_{\Gamma^+\cap \Ical^c} \qquad  \text{and}  \qquad
(\nu_*)_{\Gamma^-\cap \Ical^c} =  -x^-_{\Gamma^-\cap \Ical^c}. 
$$
Therefore, we have $$
(\theta_*)_{\Gamma^0  \cap\Ical^c}=x_{\Gamma^0  \cap\Ical^c}, \qquad (\theta_*)_{\Gamma^+ \cap\Ical^c}=x^+_{\Gamma^+ \cap\Ical^c}, \qquad \text{and}   \qquad (\theta_*)_{\Gamma^- \cap\Ical^c}=-x^-_{\Gamma^- \cap\Ical^c}.
$$
So, for any index $\Ical$ specified above, the optimal value of the indexed problem  is given by 
\begin{eqnarray*} 
\|x_{\Ical^c}-\theta_*\|^2_2+\|x_\Ical\|^2_2
& = &
\|x_{\Gamma^+\cap\Ical^c}-x^+_{\Gamma^+\cap \Ical^c}\|^2_2+\|x_{\Gamma^+\cap\Ical}\|^2_2 
\notag
  \\ 
 & + &  
\|x_{\Gamma^-\cap\Ical^c}+x^-_{\Gamma^-\cap \Ical^c}\|^2_2+\|x_{\Gamma^-\cap\Ical}\|^2_2 
\notag
  \\ 
 &+&  
\|x_{\Gamma^0 \cap\Ical^c}-x_{\Gamma^0 \cap \Ical^c}\|^2_2+\|x_{\Gamma^0 \cap\Ical}\|^2_2,
\notag
\end{eqnarray*}
where
$$
 \|x_{\Gamma^+ \cap\Ical^c} - x^+_{\Gamma^+ \cap\Ical^c}  \|^2_2 + \|x_{\Gamma^+\cap\Ical}\|_2^2 =
  \|x_{\Gamma^+ \cap\Ical^c}\|_2^2 - \|x^+_{\Gamma^+ \cap\Ical^c}  \|^2_2 + \|x_{\Gamma^+\cap\Ical}\|_2^2 
  =
 \|x_{\Gamma^+}\|^2_2-\|x^+_{\Gamma^+\cap\Ical^c}\|_2^2,
$$
$$
 \|x_{\Gamma^- \cap\Ical^c} + x^-_{\Gamma^- \cap\Ical^c}  \|^2_2 + \|x_{\Gamma^-\cap\Ical}\|_2^2 = 
  \|x_{\Gamma^- \cap\Ical^c}\|_2^2-\| x^-_{\Gamma^- \cap\Ical^c}  \|^2_2 + \|x_{\Gamma^-\cap\Ical}\|_2^2 = 
 \|x_{\Gamma^-}\|^2_2-\|x^-_{\Gamma^-\cap\Ical^c}\|_2^2,
$$
and
$$
 \|x_{\Gamma^0  \cap\Ical^c} -x_{\Gamma^0  \cap\Ical^c}  \|^2_2 + \|x_{\Gamma^0 \cap\Ical}\|_2^2 = \|x_{\Gamma^0}\|^2_2-\|x_{\Gamma^0 \cap\Ical^c}\|_2^2.
$$
Consequently, the optimal value becomes 
\begin{eqnarray*} 
\|x_{\Ical^c}-\theta_*\|^2_2+\|x_\Ical\|^2_2
& = &
 \|x_{\Gamma^+}\|^2_2-\|x^+_{\Gamma^+\cap\Ical^c}\|_2^2
\notag
  \\ 
 & + &  
\|x_{\Gamma^-}\|^2_2-\|x^-_{\Gamma^-\cap\Ical^c}\|_2^2
\notag
  \\ 
 &+&  
 \|x_{\Gamma^0}\|^2_2-\|x_{\Gamma^0 \cap\Ical^c}\|_2^2
 \notag 
 \\
 &=&
 \|x\|^2_2
 -\|x^+_{\Gamma^+\cap\Ical^c}\|_2^2
-\|x^-_{\Gamma^-\cap\Ical^c}\|_2^2
-\|x_{\Gamma^0 \cap\Ical^c}\|_2^2.
\notag
\end{eqnarray*}
This implies that the minimal value of $(\ref{pr: py})$ is achieved when 
$
\|x^+_{\Gamma^+\cap\Ical^c}\|_2^2
+\|x^-_{\Gamma^-\cap\Ical^c}\|_2^2
+\|x_{\Gamma^0 \cap\Ical^c}\|_2^2
$ 
is maximal, or equivalently when $\Ical^c = \Jcal(
[x^+_{\Gamma^+};0]-[x^-_{\Gamma^+};0]+[x_{\Gamma^0};0]
,k)$, where $\Jcal(.,k)$ selects the $k$ largest components in absolute value. Therefore, a minimizer $y^*$ is defined in (\ref{eqn: y_*}).
\end{proof}

\subsection{Proof of Lemma \ref{lem: solution_p_z}}
\begin{proof}

First, note that if $\lambda_2\delta=0$, then we have:
\begin{equation*}
\begin{aligned}
\min_{z\in \mathbb{R}^n} \quad & 
\|x-z\|_2^2
 \qquad \textrm{subject to} \qquad
 L\le z\le U.
\end{aligned}
\end{equation*}
In this case ${z_*}_i=U_i$ if $x_i>U_i$, ${z_*}_i=L_i$ if $x_i<L_i$, and otherwise ${z_*}_i=x_i$, for any $i\in [n]$, which can be rewritten as 
\begin{equation} 
    z_*= \max(L,\min(x,U)).
\end{equation}
Next, suppose that $\lambda_2\delta\ne 0$, due to the separability property of (\ref{pr: pz}), letting $\tilde {z_i}=z_i-\phi_i, \tilde x_i=x_i-\phi_i, \tilde L_i=L_i-\phi_i,$  and $\tilde  U_i= U_i-\phi_i$, for $\kappa = {(\lambda_2\delta)}/{(2\rho)}$, we shall focus on the following:
\begin{equation*}
\min_{ \tilde z_i\in \mathbb{R}} \quad 
\frac{1}{2}( \tilde z_i-\tilde x_i)^2+\kappa| \tilde z_i |
 \qquad \textrm{subject to} \qquad
 \tilde L_i\le  \tilde z_i\le \tilde U_i.
\end{equation*}
Let us assume that $\tilde z_i> 0$; then we have $\tilde z_i=\tilde x_i -\kappa$. Thus, if $ \tilde x_i -\kappa>0$ and $\tilde L_i\le \tilde x_i-\kappa \le \tilde U_i$, then $ \tilde z_{*i}=x_i -\kappa$. If $0<\tilde x_i -\kappa<\tilde L_i$, we must have $\tilde z_{*i}=\tilde L_i$. If $\tilde x_i-\kappa>\max(\tilde U_i, 0)$, then $\tilde z_{*i}=\tilde U_i$.
In the case of  $\tilde z_i< 0$, taking the derivative gives $\tilde z_i= \tilde x_i+\kappa$. Thus, if  $\tilde x_i+\kappa < 0$, and $\tilde L_i \le x_i+\kappa \le \tilde U_i$, we have $z_{*i}=x_i+\kappa$. If  $\tilde x_i+\kappa < 0$, and $\tilde x_i+\kappa < L_i$, then we must have $\tilde z_{*i}=L_i$. Otherwise, $\tilde x_i+\kappa <0,$ and $\tilde x_i+\kappa > \tilde U_i$, so we have $\tilde z_{*i}=\tilde U_i$. For $\tilde z_i=0$, we also consider three different possibilities. In this case, the optimality conditions are: $-\tilde x_i+\kappa \partial (0) -T_1+T_2 \ni 0, \, 0\le T_1\perp (\tilde L_i-\tilde z_i)\le 0, \, \text{and}\, 0\le T_2\perp(\tilde z_i-\tilde U_i)\le 0$.
It can be shown that we must have $|\tilde x_i|\le \kappa$, and depending on three scenarios: that $0<\tilde L_i, 0\in [\tilde L_i, \tilde U_i],$ or $0> \tilde U_i$, we obtain the solutions reported below:

\begin{equation*}
\tilde z_{*i}:= \left\{ 
\begin{array}{ll}
\tilde x_i-\kappa  & \qquad \mbox{if \quad  $\tilde L_i\le \tilde  x_i-\kappa\le \tilde U_i$} \qquad \text{and} \qquad
\mbox{$\tilde  x_i-\kappa>0$};
\\
\tilde x_i+\kappa & \qquad \mbox{if \quad  $\tilde L_i\le \tilde x_i+\kappa \le \tilde U_i$}\qquad \text{and} \qquad
\mbox{$\tilde  x_i+\kappa<0$};
\\
L_i & \qquad \mbox{if \quad $0<\tilde x_i-\kappa < \tilde L_i$} \qquad \text{or} \qquad \mbox{$\tilde x_i+\kappa < \min(0, \tilde L_i)$};
\\
U_i &  \qquad \mbox{if \quad  $\tilde  x_i-\kappa>\max(\tilde U_i,0)$} \qquad \text{or} \qquad
\mbox{$\tilde U_i<\tilde  x_i+\kappa<0$};
\\
0 &  \qquad \mbox{if \quad $-\kappa\le \tilde  x_i\le \kappa$} \qquad \text{and} \qquad
\mbox{$\tilde L_i\le 0\le \tilde U_i$};
\\
\tilde L_i &  \qquad \mbox{if \quad $-\kappa\le \tilde  x_i\le \kappa$} \qquad \text{and} \qquad
\mbox{$0< \tilde L_i$};
\\
\tilde U_i &  \qquad \mbox{if \quad $-\kappa\le \tilde  x_i\le \kappa$} \qquad \text{and} \qquad
\mbox{$0> \tilde U_i$};
\end{array} \right. 
\end{equation*}
If we summarize the above based on the original variables, we obtain (\ref{lem: solution_p_z}).
\end{proof}

\subsection{Proof of Lemma \ref{lem: boundedness of p_w}}
\begin{proof}
We first show the existence of a solution. It is enough to prove that the objective function is bounded below over its feasible set. Since $\beta \in (0,1)$, we have $1-{1 \over 1-\beta} < 0$ such that 
\begin{eqnarray}\label{ineq_w_gamma}
\frac{c(w,\gamma) -\rho \|x-w\|_2^2}{\lambda_3}&=& \gamma + {1 \over m(1-\beta)} \sum_{j=1}^m  \max(0, -w^T d_j-\gamma) \notag\\
&=& \gamma + \sum_{j=1}^m  \max(0, -{1 \over m(1-\beta)} w^T d_j-{1 \over m(1-\beta)} \gamma) \notag\\
&=& \sum_{j=1}^m  \max({\gamma \over m}, -{1 \over m(1-\beta)} w^T d_j + {\gamma \over m}(1-{1 \over 1-\beta}) ) \notag\\
&\ge& \sum_{j=1}^m  {1 \over   { \beta \over 1-\beta}} (-{1 \over m(1-\beta)} d_j^T w ) \\
&=& -{ 1\over m\beta} (\sum_{j=1}^m  d_j)^T w\notag \\
&\ge & -\frac{1}{m\beta}\|\sum_{j=1}^m  d_j\|\|w\| \notag \\
&\ge&  -\frac{1}{m\beta}\|U\|\|\sum_{j=1}^m  d_j\| , \notag
\end{eqnarray}
where we used that $\max(t, u-a t) \ge {u \over 1 + a}$ whenever $a>0$ for $t=\gamma/m$ and $a=\beta/(1-\beta)$, and $u=-d_j^Tw/m(1-\beta)$ in (\ref{ineq_w_gamma}). This implies that
\begin{eqnarray}
c(w,\gamma) &\ge& -\frac{\lambda_3}{m\beta}\|U\|\|\sum_{j=1}^m  d_j\|+\rho \|x-w\|_2^2 
\notag\\
&\ge &  -\frac{\lambda_3}{m\beta}\|U\|\|\sum_{j=1}^m  d_j\| ,
\end{eqnarray}
which establishes the problem (\ref{pr: pw}) is bounded below and thus it has a solution.

Note that $w_*$ is unique because the objective function is strictly convex with respect to $w$. 
Once $w_*$ is known, it can be shown that $\gamma_*$ is either zero or must be equal to $-w_*d_{j_*}$ for some $j_*\in [m]$. To see this, without loss of generality, suppose for $j=1,\dots, \bar m$ with $\bar m\le m$, we have $-w_*^Td_j-\gamma>0$; thus, (\ref{pr: pw}) reduces to 
\begin{equation*} 
\begin{aligned}
\min_{\gamma\in  \mathbb R}\quad &
\gamma+\frac{1}{m(1-\beta)}
\sum_{j=1}^{\bar m} (-w_*^Td_j-\gamma)= \min_{\gamma\in  \mathbb R}\quad & \frac{m(1-\beta)-\bar m}{m(1-\beta)}\gamma.
\end{aligned}
\end{equation*}
If $m(1-\beta)-\bar m\ge 0$, then $\gamma_*=0$; otherwise, we have $\gamma_*= \min_{j\in [\bar m]}-w_*^Td_j$.
\end{proof}

\subsection{Proof of Lemma \ref{lem: BCD_boundedness}}

\begin{proof}
For simplicity, we drop the subscript $l$ whenever it is clear.
First, simply $\|w\|\le \|\max(-L,U)\|$. In Lemma \ref{lem: boundedness of p_w},  we also proved that $\gamma=-w^Td_{j}$ for some $j\in [m]$. Thus, we have $|\gamma|\le \|w\|\max_{j\in [m]} \|d_j\| \le \|\max(-L,U)\|\max_{j\in [m]} \|d_j\|$, which implies that
\begin{equation}
    \max\{|\gamma|, \|w\|\}\le \|\max(-L,U)\|\max(1, \max_{j\in [m]}\|d_j\|):=\bar C
\end{equation}
Also, clearly $\|z\|\le \|\max(-L,U)\|$. Since $\|y\|\le \|x\|$ from (\ref{def: gen_sparse}), we remain to prove that $\|x\|$ is bounded.
Recall the definitions in (\ref{eqns: B and vartheta}) and let $\alpha = 0.5\textbf{e}^TB\textbf{e}$. We see 
$$\|B^{-1}\|=\frac{1}{{\lambda_{\min}(B)}}=\frac{1}{{\lambda_1\lambda_{\min}(A)+3\rho}}\le \frac{1}{3{\rho}},$$
and 
$$2\alpha =\textbf{e}^TB^{-1}\textbf{e}\ge \|\textbf{e}\|^2\lambda_{\min}(B^{-1})
=\|\textbf{e}\|^2\lambda^{-1}_{\max}(B)=
\frac{n}{\lambda_{\max}(\lambda_1A+3\rho)}=
\frac{n}{\lambda_1\lambda_{\max}(A)+3\rho}.$$
Also, 
$$\|\vartheta\|\le
3 \rho(\|y\|+\|z\|+\|w\|)+\lambda_2(\|\mu\|+r_c\|h\|)
\le
3 \rho(\|x\|+2\|\max(-L,U)\|)+\lambda_2(\|\mu\|+r_c\|h\|)\le 3\rho \|x\|+\mathcal{O}(1)
$$
Thus, 
the equation (\ref{eqn: x_*}) together with $\|y\|\le \|x\|$ leads to
\begin{equation*}
\begin{aligned}
\|x\| 
&\le
\frac{1}{2} 
\|B^{-1}\|
\left(
\|\vartheta\|
+
\frac{1+0.5|\textbf{e}^TB^{-1}\vartheta|}{\alpha}\|\textbf{e}\|
\right)\\ 
&
\le
\frac{1}{2} 
\|B^{-1}\|
\left(
% 3\rho(\|y\|+\|z\|+\|w\|)+\lambda_2(\|\mu\|+r_c\|h\|)
\|\vartheta\|
+
\frac{1+0.5 \|\textbf{e}\|\|B^{-1}\|\|\vartheta\|}{\alpha}\|\textbf{e}\|
\right)\\ 
& 
\le
\frac{1}{2} 
\|B^{-1}\|
\left(
\|\vartheta\|
+
\frac{1+0.5 \|\textbf{e}\|\|\vartheta\|}{6n\rho}(
\lambda_1 \lambda_{\max}(A)
+3\rho)\|\textbf{e}\|
\right)\\ 
&\le 
\frac{1}{36\rho^2}((7.5\rho+0.5\lambda_1\lambda_{\max}(A)) \|\vartheta\|+(\lambda_1 \lambda_{\max}(A)+3\rho)\|\textbf{e}\|)
\\
&\le 
\frac{1}{36\rho^2}(22.5\rho^2\|x\|+\mathcal{O}(\rho)).
\end{aligned}
\end{equation*} 
Thus, whenever $\rho\ge 1$, we have 
$
\|x\| 
\le {\mathcal{O}(1)}/{13.5\rho}\le \mathcal{O}(1).$ Consequently, we can say that $\max\{\|x_{l}\|, $ $\|y_l\|, \|z_{l}\|,\|w_l\|, |\gamma_l|\}$ is bounded above, and its bound is independent from $\rho$.
\end{proof}

\end{document}